\documentclass[a4paper, reqno, dvipdfmx]{amsart}


\usepackage{amsmath}
\usepackage{amssymb}
\usepackage{amsthm}
\usepackage[dvipdfmx]{graphicx}
\usepackage{amsfonts}
\usepackage{bm}
\usepackage{url}

\usepackage{amsmath,amsthm,amsfonts,amssymb}
\usepackage{graphicx} 
\usepackage{epstopdf}
\usepackage{enumerate}
\usepackage{url}
\usepackage{bm}
\usepackage{epsfig}
\usepackage{color}
\theoremstyle{definition}

  \newtheorem{thm}{Theorem}[section]
  
  \newtheorem{lem}[thm]{Lemma}
  
  \newtheorem{prop}[thm]{Proposition}

  \newtheorem{defi}[thm]{Definition}

\theoremstyle{remark}
\newtheorem{rem}{Remark}[section]

\numberwithin{equation}{section}\numberwithin{figure}{section}







\def\>{{\geq }}
\def\<{{\leq }}

\newcommand{\Res}{\mathop{\rm Res}}


\title[Witten--Reshetikhin--Turaev function for Seifert loops]
{Witten--Reshetikhin--Turaev function for a knot in Seifert manifolds}

\author{Hiroyuki Fuji}%
\address{Department of Information Systems,
Osaka Institute of Technology, 
Kitayama, Hirakata, Osaka, 573-0196, Japan}
\email{hiroyuki.fuji@oit.ac.jp}

\author{Kohei Iwaki}%
\address{Graduate School of Mathematical Sciences, 
The University of Tokyo, 
3-8-1 Komaba, Meguro-ku, Tokyo, 153-8914, Japan}
\email{iwaki@ms.u-tokyo.ac.jp}

\author{Hitoshi Murakami}%
\address{Graduate School of Information Sciences,
Tohoku University,
Aramaki-aza-Aoba 6-3-09, Aoba-ku,
Sendai, 980-8579, Japan}
\email{hitoshi@tohoku.ac.jp}

\author{Yuji Terashima}%
\address{Graduate School of Science, Tohoku University,
Aramaki-aza-Aoba 6-3, Aoba-ku, Sendai, 980-8578, Japan}
\email{yujiterashima@tohoku.ac.jp}

\date{}

\def\sl{{X (p_1/q_1,\dots,p_n/q_n)}}

\def\wf{{\Phi (q;N)}}

\def\qm{{\hat{\mathfrak m}}}
\def\ql{{\hat{\mathfrak l}}}

\begin{document}
\maketitle


\dedicatory{\it Dedicated to the memory of Toshie Takata}

\begin{abstract}
In this paper, for a Seifert loop (i.e., a knot in a Seifert three-manifold), 
first we give a family of an explicit function $\Phi(q; N)$ 
whose special values at roots of unity are identified with the Witten--Reshetikhin--Turaev 
invariants of the Seifert loop for the integral homology sphere. 
Second, we show that the function $\Phi(q; N)$ satisfies a $q$-difference equation
whose classical limit coincides with a component of the character varieties of the Seifert loop. 
Third, we give an interpretation of the function $\Phi(q; N)$ from the view point of the resurgent analysis.  
\end{abstract}

\section{Introduction}


Given a knot in a three-manifold and a level $K \in {\mathbb N}$, 
Reshetikhin--Turaev constructed the quantum invariant through the representation 
of quantum groups \cite{RT91}. This invariant is closely related  
to the Chern-Simons path integral which was investigated by Witten \cite{Witten89}. 
The quantum invariant is now called the Witten--Reshetikhin--Turaev (WRT) invariant. 
Their discovery triggered an active interaction between topology and 
mathematical physics that continue to this day. 


In this article, we will investigate several properties of 
a family of certain functions $\{ \Phi(q; N) \}_{N \in {\mathbb N}}$, 
which are defined as $q$-series on the unit disk $|q| < 1$, 
associated with a Seifert loop $\sl = (M, L)$ (i.e., a knot $L$ in a Seifert manifold $M$). 
Here the underlying Seifert manifold $M$ is obtained from $S^3$ through 
a (partial) rational surgery along the surgery diagram depicted in 
Figure \ref{fig:Seifert-loop} below.
Therefore, $p_1, q_1, \dots, p_n, q_n$ determine the topological type of 
the underlying Seifert manifold $M$, and $n$ denotes the number of singular fibers in $M$. 
We will impose a condition on $p_i$'s and $q_i$'s so that the underlying Seifert manifold 
is an integral homology sphere. 

We call the $q$-series $\Phi(q; N)$ the  ($SU(2)$) WRT function with the color $N$ 
since it is closely related to the ($SU(2)$) WRT invariant $\tau(K; N)$ 
for the Seifert loop with the color $N$ 
(which was studied by Lawrence--Rozansky \cite{LR} when $N=1$
and Beasley \cite{Beasley09} when $N \ge 1$). 
Namely, for any level $K \in {\mathbb N}$ and any color $N \in {\mathbb N}$, 
the radial limit of $\Phi(q; N)$ when $q$ tends to the root of unity $\exp(2\pi i /K)$ 
coincides with $\tau(K; N)$. More precisely, we have the following relation 
(see Theorem \ref{thm:q-series-and-quantum-invariant}): 
\begin{equation} \label{eq:radial-limit-theorem-in-introduction}
\lim_{t \to +0} \Phi(e^{\frac{2 \pi i }{K}} \, e^{-t}; N) 
= \tau(K; N).
\end{equation}
Hence, the WRT function can be regarded as an analogue of the 
$N$-colored Jones polynomials; indeed, when $n = 2$, 
the WRT function for $X(p_1/q_1, p_2/q_2)$ 
coincides with the $N$-colored Jones polynomial 
for the $(p_1, p_2)$-torus knot up to a certain normalization factor
(see Section \ref{subsection:WRT-function}).

The idea of describing the sequence of WRT invariants $\{ \tau(K; N) \}_{K \in {\mathbb N}}$ 
as limit values of a single $q$-series at roots of unity 
goes back to the work of Lawrence--Zagier \cite{LZ}, where the WRT invariant of 
the Poincar\'e homology sphere $\Sigma(2,3,5)$ was realized as the limit value 
of a $q$-series obtained as the Eichler integral of a modular form with a half-integral weight.
The $q$-series agrees with the WRT function in the case $N = 1$, $n = 3$ and $(p_1, p_2, p_3) = (2,3,5)$.
Generalizations of \cite{LZ} to Seifert manifolds were discussed by Hikami in his series of papers 
\cite{Hikami04, Hikami04-2, Hikami06, Hikami05, Himami11}. 

More recently, the idea was further developed by Gukov--Putrov--Vafa \cite{GPV16} based on 
very interesting perspectives from theoretical physics. 
They conjectured that there exists a decomposition of WRT invariant of a 3-manifold $M_3$ such that 
(a certain ``$S$-transform" of) the summands are labeled by $a \in {\rm Tor} H_1(M_3; {\mathbb Z})$ and 
given in terms of the limit value of a certain $q$-series, denoted by $\hat{Z}_{a}(q)$, 
which has integer coefficients. They also claim that there is a 3-manifold analogue 
of Khovanov-type homology which categorifies $\hat{Z}_a$. 
In \cite{GPPV17}, Gukov--Pei--Putrov--Vafa also discussed an analogue of $\hat{Z}_{a}(q)$ for 
a 3-manifold with a colored knot inside.  
Gukov--Manolescu also introduced a closely related two-variable series $F_{K}(x;q)$ in \cite{GM19}. 
For Seifert loops, we strongly believe that our WRT function $\Phi(q; N)$ is essentially 
the same as $\hat{Z}_a(q)$ with $a$ being the trivial connection, 
and our Theorem \ref{thm:q-series-and-quantum-invariant} provides a rigorous proof of 
the Conjecture 2.1 and Conjecture 4.1 
(in particular, the equality (A.27)) 
in \cite{GPPV17} for Seifert manifolds/loops 
with an arbitrary number of singular fibers.

We will also derive a $q$-difference equation satisfied by the WRT function. 
That is, we find a $q$-difference operator 
$\hat{A}(\hat{\mathfrak m}, \hat{\mathfrak l}; q)$
which annihilates the WRT function 
(see Theorem \ref{q-difference}): 
\begin{equation}
\hat{A}(\hat{\mathfrak m}, \hat{\mathfrak l}; q) \Phi(q; N) = 0.
\end{equation}
Here $\hat{\mathfrak m}$ and $\hat{\mathfrak l}$ acts as 
$\hat{\mathfrak m} \, \Phi(q;N) = q^{N/2} \, \Phi(q; N)$ 
and $\hat{\mathfrak l} \, \Phi(q;N) = \Phi(q; N+1)$, 
and they satisfy the $q$-commutation relation ${\ql \qm} = q^{1/2} {\qm \ql}$.
We will also confirm in Theorem \ref{AJ} that the classical limit 
of the $q$-difference operator is a component of the zero locus of 
the $A$-polynomial for the Seifert loop $\sl$. 
We note that the two-variable series $F_K(x,q)$ in \cite{GM19} is also expected to 
posses a similar property (see Conjecture 1.6 in \cite{GM19}). 
This observation is closely related to the AJ-conjecture \cite{Garoufalidis} 
which claims that the colored Jones polynomial for a knot in $S^3$ 
satisfies a $q$-difference equation whose classical limit coincides with 
the $A$-polynomial for the knot.
See also \cite{Gukov04} where a physical interpretation of the AJ-conjecture was given 
as the quantum volume conjecture.  
We also note that our WRT function is not a polynomial in general
unlike the colored Jones polynomial for a knot in $S^3$.

Finally, we give an alternative expression of the WRT function through the 
resurgent analysis for the perturbative part of the WRT invariant. 
Here, the perturbative part is a formal series obtained as the asymptotic expansion 
when $K \to + \infty$ of the part of WRT invariant which captures the contribution 
from the trivial connection on the Seifert loop. 
We borrow the ideas of Costin--Garoufalidis \cite{CG11}, 
Gukov--Marin\~o--Putrov \cite{GMP16}, 
Chun \cite{Chun17} and Chung \cite{Chung18}. 
These articles showed that a certain average of the Borel sums 
(median summation) of the perturbative part of the WRT invariant 
gives a $q$-series which has a nice modular property.
We will see that the median summation of the perturbative part of 
the WRT invariant of the Seifert loop $\sl$ coincides with 
the WRT function $\Phi(q;N)$ up to an overall factor through the change 
$q = \exp(2 \pi i/K)$ of the variables (see Theorem \ref{thm:WRT-function-as-Borel-sum}). 
Our computation relies on the integral expression of 
(the perturbative part of) the WRT invariant for the Seifert loops obtained by 
Lawrence--Rozansky \cite{LR} for $N = 1$ and Beasley \cite{Beasley09} for $N \ge 1$.
Our computation agrees with the comments in \cite{GMP16, GPPV17} which claim that the 
previously mentioned $q$-series $\hat{Z}_a$ is also computed via resurgent analysis.

We also note that the WRT invariant of Seifert manifolds for 
arbitrary finite dimensional complex simple Lie algebra was studied by 
Hansen--Takata \cite{Hansen-Takata-1, Hansen-Takata-2}
and Marin\~o \cite{Marino05}. 
It seems to be interesting to generalize the results in this paper 
and test the conjectures in \cite{GPPV17} 
for the WRT invariant of Seifert manifolds and Seifert loops 
for these Lie algebras.

This paper is organized as follows. 
In Section \ref{section:WRT-function-and-WRT-invariant}, we will introduce 
the WRT function for the Seifert loops and show that the WRT invariant 
is its radial limit. (We need a couple of technical lemmas which are proved in appendix). 
The $q$-difference equation satisfied by the WRT function will be derived 
in Section \ref{section:q-diffeence-WRT-invariant}. 
We will also discuss our partial proof of the AJ conjecture there. 
Section \ref{section:resurgence} will be devoted to the resurgent analysis 
of the perturbative part of the WRT invariant of the Seifert loop, where 
we will derive the WRT function through the Borel (median) summation. 

\begin{rem}
After the submission of this paper, we are informed that the paper \cite{AP} 
of Andersen--Misteg$\mathring{\rm a}$rd has been updated and contains overlapping results. 
They analyzed the Gukov--Pei--Putrov--Vafa's $q$-series $\hat{Z}_0(q)$ for Seifert homology spheres  
with arbitrary number of singular fibers, corresponding to the class $a = 0$. 

Firstly, (a normalization of) the WRT function $\Phi(q;N)$ with $N = 1$ has already appeared 
in the thesis \cite[Section 7.1.5]{Mistegard} of Misteg$\mathring{\rm a}$rd, and its coincidence with 
$\hat{Z}_0(q)$ for Seifert homology spheres was also expected there. 
This conjecture was proved by Andersen--Misteg$\mathring{\rm a}$rd in the new version \cite[p.\,25]{AP} 
through an explicit calculation of $\hat{Z}_0(q)$; 
thus we can now identify the WRT function $\Phi(q;N)$ with $\hat{Z}_0(q)$, at least when $N = 1$, thanks to their work.  
Andersen--Misteg$\mathring{\rm a}$rd also gave an expression of $\hat{Z}_0(q)$ through the resurgent analysis, 
which agrees with our Theorem \ref{thm:WRT-function-as-Borel-sum} in that case. 
Furthermore, under the assumption that one of $p_i$ is even, \cite{AP} also gave a proof of the radial limit property 
\eqref{eq:radial-limit-theorem-in-introduction} of $\hat{Z}_0(q)$ through a different method from the one used in this paper. 
These results were announced in the (online) talks \cite{Andersen-talk, Mistegard-talk} by the authors of \cite{AP}. 
\end{rem}

\subsection*{Acknowledgement}
The authors are grateful to William Elb$\ae$k Misteg$\mathring{\rm a}$rd, who kindly shear the new version of \cite{AP} 
and informed us that the WRT function $\Phi(q;N)$ and $\hat{Z}_0(q)$ are essentially the same $q$-series at least when $N=1$.  
We also thank 
Kazuhiro Hikami, 
Masaya Kameyama, 
Nobushige Kurokawa,
Serban Mihalache,
Akihito Mori,
Nobuo Sato 
and
Sakie Suzuki 
for valuable comments and discussions.
This work is partially supported by JSPS KAKENHI Grant Numbers 
JP16H03927, 
JP16H06337, 
JP17H06127, 
JP17K05239, 
JP17K05243, 
JP18K03281, 
JP20K03601, 
JP20K03931, 
JP20K14323. 

The authors would also express their deepest appreciation to Toshie Takata, 
who passed away on April 11th, 2020.
She was one of the pioneers of Quantum Topology.

\section{WRT invariant and WRT function for Seifert loops} 
\label{section:WRT-function-and-WRT-invariant}

In this section, we introduce an explicit $q$-series $\wf$, 
labeled by $N \ge 1$, 
whose special values at $K$-th root of unities 
are identified with the $SU(2)$ Witten--Reshetikhin--Turaev (WRT) invariants 
of the Seifert loop $\sl$ with level $K$ and color $N$.

\subsection{WRT invariant for Seifert loops}
\label{section:WRT-invariant}

Here we summarize several facts on the $SU(2)$ Witten--Reshetikhin--Turaev (WRT) invariant 
for the Seifert loop, which is a pair of the Seifert manifold $M$ and a knot $L$ inside of $M$. 
We denote by $\sl=(M,L)$ the Seifert loop, where the integers 
$p_1, \dots, p_n$ and $q_1, \dots, q_n$ specify the topological type 
of the Seifert manifold $M$ with $n$-singular fibers. 
More precisely, we take pairwise coprime integers $p_1, \dots, p_n \ge 2$
and integers $q_1, \dots, q_n$ satisfying  
\begin{equation} \label{eq:integral-homology-condition}
p_1 \cdots p_n \sum_{i=1}^{n} \frac{q_i}{p_i} = 1.
\end{equation}
Then, $\sl$ is obtained by a partial rational surgery along a link 
$L_0 \cup L_1 \cup \cdots L_n \cup L$ inside $S^{3}$ depicted in Figure \ref{fig:Seifert-loop}.
Here, the surgery indices of $L_0, L_1, \dots, L_n$ 
are $0, p_1/q_1, \dots, p_n/q_n$, respectively, 
and we do not apply the surgery along the last component $L$. 
The surgery along $L_0 \cup L_1 \cup \cdots L_n$ gives the 
Seifert manifold $M$, while $L$ remains as a knot in $M$. 
The assumption \eqref{eq:integral-homology-condition} guarantees 
that our Seifert manifold $M$ is an integral homology sphere. 

\begin{figure}[t]
\begin{center} 
  \includegraphics[width=7.0cm]{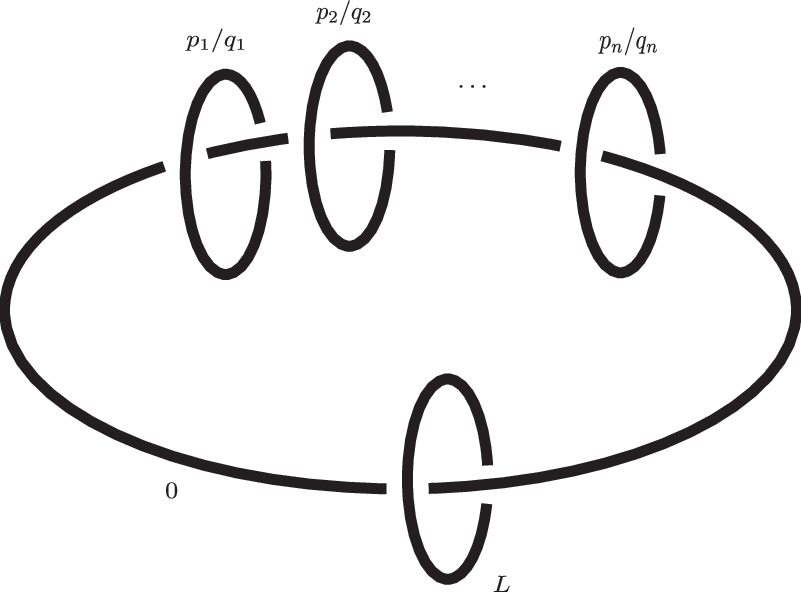} 
  \caption{Surgery diagram for the Seifert loop $\sl$.}
  \label{fig:Seifert-loop}
\end{center}  
\end{figure}

The WRT invariant of a general pair $(M,L)$ of any $3$-manifold $M$ 
and any framed colored knot $L$ in $M$ was defined in \cite[page 560]{RT91}. 
In \cite{LR}, Lawrence--Rozansky explicitly computed WRT invariant for 
the Seifert manifold $M$ when the knot $L$ is absent. 
A similar consideration at section 4 in \cite{LR} to the
$0$-framed Seifert loop $\sl$ 
gives the following explicit expression of the WRT invariant 
with any level $K \in {\mathbb Z}_{\ge 1}$ and 
with any color $N \in {\mathbb Z}_{\ge 1}$ along $L$:
\begin{align}
& \tau_{\sl}(K; N) = 
\frac{B G_0(K)}{K} \,
e^{- \frac{\pi i}{2K} \bigl( \Theta_0 + (N^2 - 1) P \bigr)}
\notag \\[-.0em] 
& \qquad \times \sum_{\substack{k = 0 \\ K |\hspace{-.3em}/ k}}^{2PK-1}
e^{- \frac{\pi i}{2K P}k^2}  \, 
\frac{
e^{N \frac{\pi i k}{K}}  - e^{-N \frac{\pi i k}{K}}}
{e^{\frac{\pi i k}{K}}  - e^{-\frac{\pi i k}{K}} }  
\, 
\frac{\displaystyle \prod_{j=1}^{n} 
\left( e^{\frac{\pi i k}{K p_j}}  - e^{-\frac{\pi i k}{K p_j}} \right)}
{\displaystyle \left( e^{\frac{\pi i k}{K}}  - e^{-\frac{\pi i k}{K}} \right)^{n-2}}.
\label{eq:SU2-WRT-invariant}
\end{align}
Here, we have used the same notations in \cite{LR}:
\begin{align}
P & := p_1 \cdots p_n, \\
B & := - \frac{1}{4\sqrt{P}} \, e^{\frac{3 \pi i}{4}}, \\
\Theta_0 & := 3 - \frac{1}{P} 
+ 12 \sum_{j=1}^{n} s(q_j, p_j), \\
G_0(K) & := \sqrt{\frac{K}{2}} \,
\frac{1}{\sin(\pi / K)}.
\label{eq:G-0}
\end{align}
where $s(q,p)$ is the Dedekind sum defined by 
\begin{equation}
s(q,p) := \frac{1}{4p} \sum_{\ell=1}^{p-1} \cot\Bigl( \frac{\pi \ell}{p} \Bigr) \, 
\cot \Bigl( \frac{\pi \ell q}{p} \Bigr).
\end{equation}
Note that the WRT invariant for the Seifert manifold $M$, which was computed in 
\cite[eq.\,(4.2)]{LR}, corresponds to the $N = 1$ case. 
(Our \eqref{eq:SU2-WRT-invariant} with $N=1$ is denoted by $Z_{K}(M)$ in \cite{LR}. 
We will use $Z$ for differently normalized WRT invariant below.)
In what follows, we will use simpler notation $\tau(K; N)$ for \eqref{eq:SU2-WRT-invariant}.

We will also use the WRT invariant with an alternative normalization:
\begin{equation}
\label{eq:WRT-invariant-Seifert-loop}
Z(K; N) := \frac{\tau(K;N)}{G_0(K)}.
\end{equation}
We note that $G_0(K)$ coincides with the WRT invariant 
$\tau_{S^1 \times S^2}(K)$ for $S^1 \times S^2$. 
We note that we have used the WRT invariant with 
the normalization $\tau_{S^3}(K) = 1$ (c.f., \cite{KM91}).

\begin{rem}
Applying the similar technique used in the proof of \cite[Theorem 1 or eq.\,(4.8)]{LR}, 
we can derive the following integral expression (including a residual part) 
of the WRT invariant for the Seifert loop $\sl$: 
\begin{align}
&  Z(K; N) = \frac{B}{2 \pi i} \, 
e^{- \frac{\pi i}{2K} \bigl( \Theta_0 + (N^2 - 1) P \bigr)}
\notag \\ 
& \hspace{+.5em} \times \Biggl[ \int_{{\mathbb R}  e^{\frac{\pi i}{4}}} 
e^{K  g(y)} 
F_N(y) \, dy 
 - 2 \pi i  \sum_{m=1}^{2P-1} \Res_{y= 2m\pi i} 
\frac{e^{K g(y)}  F_N(y)}
{1 - e^{-K y}} \, dy
\Biggr].
\label{eq:BLR-integral}
\end{align}
Here, 
\begin{align}
g(y) & := \frac{i}{8 \pi P} y^2,
\label{eq:def-of-g}
\\[+.5em]
F_N(y) & := 
\dfrac{\displaystyle 
\left( e^{\frac{N y}{2}} - e^{-\frac{N y}{2}} \right)
\prod_{j=1}^{n}
\left( e^{\frac{y}{2p_j}} - e^{-\frac{y}{2p_j}} \right)}
{\left( e^{\frac{y}{2}} - e^{-\frac{y}{2}} \right)^{n-1}}.
\label{eq:def-of-FN}
\end{align}
Note that the integral expression was also derived by Beasley 
in \cite[eq.\,(7.62)]{Beasley09} 
through the localization of Chern--Simons path integral. 
As is mentioned in \cite[page 302]{LR} and \cite[eq.\,(7.64)]{Beasley09}, 
the integral 
\begin{equation} \label{eq:trivial-connection-part}
Z_{\rm triv}(K; N) := 
\frac{B}{2 \pi i} \, e^{- \frac{\pi i}{2K} \bigl( \Theta_0 + (N^2 - 1) P \bigr)}
\int_{{\mathbb R}  e^{\frac{\pi i}{4}}} 
e^{K  g(y)} 
F_N(y) \, dy 
\end{equation}
is the contribution from the trivial connections on $\sl$ to the WRT invariant. 
In Section \ref{section:resurgence}, following the idea of 
Costin--Garoufalidis \cite{CG11} and 
Gukov--Marin\~o--Putrov \cite{GMP16}, 
we will use the asymptotic expansion of $Z_{\rm triv}(K; N)$ when $K \to +\infty$ 
to recover full information (including other flat connections) of the WRT invariant. 
We also note that, when $n = 2$, 
the formula \eqref{eq:BLR-integral} coincides with 
\eqref{eq:trivial-connection-part} 
(c.f., \cite[Section 7 and Appendix B]{Beasley09}), 
and the integral expression was effectively used to test 
the volume conjecture (\cite{Kashaev, Murakami-Murakami01}) 
for torus knots in $S^3$; 
see 
\cite{KT99, Murakami04, Hikami-Murakami08, Hikami-Murakami10}.
See also \cite{MMOTY, Gukov04, Murakami06} and the monograph \cite{MY}
for the complexified version and the generalized version
of the volume conjecture. 
 
\end{rem}

\subsection{WRT function for Seifert loops and values at roots of unity}
\label{subsection:WRT-function}

\begin{defi} 
For a positive integer $N$, we define an $N$-colored WRT function $\wf$ 
of a Seifert loop $\sl$ as the following $q$-series:
\begin{align}
& \wf :=\frac{(-1)^n}{2(q^{\frac{1}{2}} - q^{-\frac{1}{2}})} 
\,\, q^{-\frac{1}{4}(\Theta_0+(N^2-1)P)}
 \notag \\
& ~~ \times \sum_{\ell = - \frac{N-1}{2}}^{\frac{N-1}{2}} 
\sum_{(\varepsilon_1, \dots, \varepsilon_n) \in \{\pm 1 \}^n} 
\varepsilon_1 \cdots \varepsilon_n \,
\sum_{m=0}^{\infty} \dbinom{m+n-3}{n-3} \, 
q^{\frac{P}{4} (2m + 2 \ell + n-2 + 
\sum_{j=1}^{n}\frac{\varepsilon_j}{p_j})^2}. 
\label{eq:def-WRT-function}
\end{align}
\end{defi}

Let us give remarks on the WRT function.
\begin{rem} 
\begin{itemize}
\item[(i)] 
When $n = 1$ and $2$, we understand that the binomial coefficient 
in \eqref{eq:def-WRT-function} to be 
\begin{equation}
\dbinom{m+n-3}{n-3} = 
\begin{cases}
1 & \text{if $m = 0$} \\
0 & \text{if $m \ge 1$}.
\end{cases}
\end{equation}
Therefore, for $n = 1$ and $2$, 
the right-hand side of \eqref{eq:def-WRT-function}
becomes a finite sum. 
In particular, when $n=2$, we have
\begin{equation}
\Phi_{n=2}(q;N) = 
\frac{q^{\frac{N}{2}} - q^{- \frac{N}{2}}}{q^{\frac{1}{2}} - q^{- \frac{1}{2}}} \, 
J_{T_{p_1, p_2}}(q;N)
\end{equation}
with 
\begin{align}
& J_{T_{p_1, p_2}}(q;N) := 
\frac{q^{\frac{p_1 p_2}{4}(1-N^2)}}{q^{\frac{N}{2}} - q^{- \frac{N}{2}}} 
\notag \\ 
& \qquad \times 
\sum_{\ell = - \frac{N-1}{2}}^{\frac{N-1}{2}} 
\left(
q^{p_1p_2 \ell^2-(p_1+p_2)\ell+\frac{1}{2}} -  
q^{p_1p_2 \ell^2-(p_1-p_2)\ell-\frac{1}{2}}
\right).
\end{align}
Note that $J_{T_{p_1, p_2}}(q;N)$ is the colored Jones polynomial 
for the $(p_1, p_2)$-torus knot. 
(See \cite{RJ93}, \cite[Section 3]{Morton95} and 
\cite{Hikami-Kirillov} for example.)
The above colored Jones polynomial is normalized as 
$J_{\rm unknot}(q;N) = 1$. 
From these facts, we may regard our $\Phi(q;N)$ 
as a generalization of colored Jones polynomial which is 
normalized so that it gives the $q$-integer 
$(q^{\frac{N}{2}} - q^{- \frac{N}{2}})/(q^{\frac{1}{2}} - q^{- \frac{1}{2}})$
for the unknot.

\item[(ii)]
When $n = 3$ an $N=1$, the $q$-series 
$2 (q^{\frac{1}{2}} - q^{-\frac{1}{2}}) q^{\frac{1}{4}(\Theta_0+(N^2-1)P)} \, \Phi(q; N)$ 
is specialized to be the Eichler integral of a modular form with a half-integral weight 
which was considered in Lawrence--Zagier \cite{LZ} and Hikami \cite{Hikami04}.

\item[(iii)] 
As we mentioned in the introduction, we expect that the WRT function 
is essentially the same as the $q$-series $\hat{Z}_a$ in \cite[Section 4]{GPPV17}. 
This is true when $N = 1$ due to the recent work \cite{AP} of Andersen--Misteg$\mathring{\rm a}$rd.
We also expect that our $\Phi(q;N)$ agrees with the two-variable series $F_K(x,q)$ studied in \cite{GM19} with $x = q^{N}$ 
in the presence of a knot $L$ inside the Seifert manifold (c.f., \cite{Chung20}). 
\end{itemize}
\end{rem}

The main result in this section is

\begin{thm}\label{thm:q-series-and-quantum-invariant}
For each $K \in {\mathbb Z}_{\ge 1}$, we have
\begin{equation} \label{eq:q-series-limiting-value}
\lim_{t \rightarrow 0+} \Phi\left(e^{\frac{2\pi i}{K}} \, e^{-t}; N \right) 
= 
\tau(K;N).
\end{equation}
\end{thm}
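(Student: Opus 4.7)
The plan is to substitute $q = e^{2\pi i/K}\,e^{-t}$ into the defining series \eqref{eq:def-WRT-function} and match the radial limit $t \to 0^+$ with the finite expression \eqref{eq:SU2-WRT-invariant} for $\tau(K;N)$. The proof naturally splits into a prefactor comparison, a reorganization of the triple sum via a change of summation variable that exposes $K$-periodicity, and a cancellation argument extracting the finite limit from a collection of individually divergent pieces.

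First, I compare the overall prefactors. As $t \to 0^+$, the factor $(-1)^n q^{-(\Theta_0 + (N^2-1)P)/4}/\bigl(2(q^{1/2} - q^{-1/2})\bigr)$ tends to an explicit multiple of $e^{-\pi i(\Theta_0 + (N^2-1)P)/(2K)}/\sin(\pi/K)$, which, using the definitions of $B$ and $G_0(K)$ in \eqref{eq:G-0}, differs from the prefactor $BG_0(K)/K$ of \eqref{eq:SU2-WRT-invariant} only by an explicit computable constant. The theorem thus reduces to showing that the limit of the remaining triple sum equals the residual sum over $k \in \{0, \ldots, 2PK-1\} \setminus K\mathbb{Z}$ in \eqref{eq:SU2-WRT-invariant}, times that constant.

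Second, I introduce the integer variable
\begin{equation*}
k \;=\; P\,(2m + 2\ell + n - 2) + \sum_{j=1}^{n} \frac{\varepsilon_j\, P}{p_j},
\end{equation*}
so that the Gaussian exponent rewrites as $q^{k^2/(4P)}$. With $q = e^{2\pi i/K}\,e^{-t}$, the phase $e^{2\pi i k^2/(4PK)}$ is periodic in $k$ modulo $2PK$, equivalently periodic in $m$ modulo $K$. Splitting $m = Kj + r$ with $r \in \{0,\ldots,K-1\}$ and $j \ge 0$ separates the phase (depending only on $r$, $\ell$, $\varepsilon$) from a Gaussian damping and a binomial coefficient $\binom{Kj + r + n - 3}{n-3}$ that is a polynomial of degree $n-3$ in $j$. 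Applying Poisson summation in $j$ (or, equivalently, summation by parts $n-2$ times to reduce the binomial to a constant) evaluates each inner sum as an explicit Gaussian integral in dual variables. Once the $j$-sums are resolved, the sign-weighted summation over $(\ell, \varepsilon)$ against $\varepsilon_1 \cdots \varepsilon_n$ reassembles the combinatorial structure of \eqref{eq:SU2-WRT-invariant}: the $\varepsilon$-antisymmetrization rebuilds the product $\prod_j (e^{\pi i k/(Kp_j)} - e^{-\pi i k/(Kp_j)})$, while the geometric sum over $\ell \in \{-(N-1)/2, \ldots, (N-1)/2\}$ rebuilds the ratio $(e^{N\pi i k/K} - e^{-N\pi i k/K})/(e^{\pi i k/K} - e^{-\pi i k/K})$. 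The exclusion $K \nmid k$ emerges because contributions with $K \mid k$ either vanish after antisymmetrization or are absorbed in the limiting prefactor $1/(q^{1/2} - q^{-1/2})$.

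The hard part is that each inner $j$-sum diverges as $t \to 0^+$ (the Gaussian width is $O(1/\sqrt{t})$ while the polynomial factor grows like $j^{n-3}$), and only after the full $(\ell, \varepsilon)$-antisymmetrization does one obtain a finite limit. The key cancellation identity is that
\begin{equation*}
\sum_{\varepsilon \in \{\pm 1\}^n} \varepsilon_1 \cdots \varepsilon_n \, f\Bigl(\sum_{j} \varepsilon_j/p_j\Bigr) \;=\; 0
\end{equation*}
whenever $f$ is a polynomial of degree strictly less than $n$: only monomials containing every $\varepsilon_j$ an odd number of times survive the antisymmetrization, forcing degree $\ge n$. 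This annihilates all Taylor coefficients of the Gaussian profile below order $n$, and precisely such coefficients are the source of the divergences. Making this quantitative — uniformly controlling the $2^n N$ divergent terms in $t$ and matching them against the discrete residue sum over $k$ — is where the technical lemmas deferred to the appendix come in.
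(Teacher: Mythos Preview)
Your proposal has the right architecture and correctly identifies the polynomial cancellation identity
\[
\sum_{\varepsilon \in \{\pm 1\}^n} \varepsilon_1\cdots\varepsilon_n \, f\Bigl(\textstyle\sum_j \varepsilon_j/p_j\Bigr)=0
\quad\text{for }\deg f < n
\]
as the mechanism that kills the divergent pieces. But there is a genuine gap in how you invoke it. After the split $m=Kj+r$, each fixed $(\ell,\varepsilon)$ carries not only the damped $j$-sum but also the residue-class phase
\[
\sum_{r=0}^{K-1} e^{\frac{\pi i}{2KP}\,(2Pr + a_{\ell,\varepsilon})^2},
\]
which depends on $\varepsilon$ through $a_{\ell,\varepsilon}$ in a \emph{non-polynomial} way. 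Your cancellation identity only applies to polynomial dependence on $\sum_j \varepsilon_j/p_j$; it says nothing about an exponential-of-quadratic factor. The divergent coefficients you want to kill are thus multiplied by an $\varepsilon$-dependent Gauss sum, and the antisymmetrization does not obviously annihilate them. What is actually needed---and what the paper proves as a separate number-theoretic lemma (Lemma~\ref{lemma:exponents-of-gauss-sum} in the appendix)---is that this Gauss sum is \emph{independent of $\varepsilon$}, established by constructing explicit bijections on residue classes via the Chinese Remainder Theorem and the pairwise coprimality of the~$p_j$. Only after that factorization does your polynomial identity finish the job.

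The paper's route is also technically different from yours. Rather than attacking the quadratic damping $e^{-t k^2/(4P)}$ directly, it introduces an auxiliary series $\phi^{(1)}(t)$ with \emph{linear} damping $e^{-tk}$, for which the $m$-sum collapses to a closed form $(1-x)^{-(n-2)}$ and quadratic reciprocity for Gauss sums gives an explicit finite expression. The limit $b_0^{(1)}$ of $\phi^{(1)}$ is then identified with the WRT sum \eqref{eq:SU2-WRT-invariant}. The passage back to the quadratic damping (your $\phi^{(2)}$) is handled not by Poisson summation but by a Mellin-transform argument: the associated Dirichlet series satisfy $L^{(2)}(s)=L^{(1)}(2s)$, so the constant terms agree. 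Your ``Poisson summation in $j$'' or ``summation by parts $n-2$ times'' is not obviously equivalent to this, and you have not explained how either would produce the dual sum over $k\in\{0,\dots,2PK-1\}$ with the exclusion $K\nmid k$; in the paper that exclusion arises naturally from splitting the reciprocity-transformed sum into $K\mid k$ and $K\nmid k$ pieces, the former being exactly the part killed by the Gauss-sum lemma plus the polynomial identity.
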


We note that some special cases of Theorem \ref{thm:q-series-and-quantum-invariant} 
were proved in previous works.
Lawrence--Zagier \cite{LZ} proved the statement for the Poincar\'e homology sphere 
(i.e.,  $N = 1$, $n=3$ and $(p_1, p_2, p_3) = (2,3,5)$), 
and Hikami \cite{Hikami04} also gave a proof for the Brieskorn homology spheres  
(i.e.,  $N = 1$, $n=3$ and general pairwise coprime triple $(p_1, p_2, p_3)$). 
Theorem \ref{thm:q-series-and-quantum-invariant} suggests that 
the $q$-series $\Phi(q; N)$ is an ``analytic continuation" of 
the quantum invariant $\tau(K; N)$ with respect to $K$ from 
integers to complex numbers.
We will prove Theorem \ref{thm:q-series-and-quantum-invariant}
in the next subsection.

\subsection{Proof of Theorem \ref{thm:q-series-and-quantum-invariant}} 

For $k = 1, 2$ and $t \in {\mathbb C}$ with ${\rm Re}\, t >0$, define 
\begin{align}
\phi^{(k)}(t) & := (-1)^n
\sum_{\ell = - \frac{N-1}{2}}^{\frac{N-1}{2}} \,
\sum_{(\varepsilon_1, \dots, \varepsilon_n) \in \{\pm 1 \}^n} 
\varepsilon_1 \cdots \varepsilon_n 
 \notag \\[+.3em]
& \quad \times
\sum_{m=0}^{\infty} \dbinom{m+n-3}{n-3} \,
e^{\frac{\pi i}{2KP}(2Pm + a_{\ell, \varepsilon})^2} \,
e^{- (2Pm + a_{\ell, \varepsilon})^k \, t},
\end{align}
where $a_{\ell, \varepsilon} \in {\mathbb Z}$ is given by
\begin{equation} \label{eq:a-ell-epsilon}
a_{\ell, \varepsilon} := P \Bigl( 2 \ell + n-2 
+ \sum_{j=1}^{n}\frac{\varepsilon_j}{p_j}\Bigr).
\end{equation}
Note that 
\begin{equation}
\left[ 2(q^{\frac{1}{2}} - q^{- \frac{1}{2}}) 
q^{\frac{1}{4}(\Theta_0+(N^2-1)P)} \, \Phi(q; N) 
\right]_{q = e^{\frac{2\pi i}{K}} \, e^{-t}} = 
\phi^{(2)}\Bigl(\frac{t}{4P}\Bigr).
\end{equation}

Following the idea of \cite{LZ} and \cite{Hikami04},  
we will prove Theorem \ref{thm:q-series-and-quantum-invariant} 
along the following scheme. 


\begin{prop} \label{prop:key-prop} 
\begin{itemize}
\item[{\rm (i)}]
The function $\phi^{(1)}(t)$ has an asymptotic expansion 
of the form
\begin{equation} \label{eq:asymptotic-phi}
\phi^{(1)}(t) \sim \sum_{r=0}^{\infty} b^{(1)}_{r} t^{r}
\end{equation}
when $t \rightarrow 0+$. 
Furthermore, the limit value $b_0^{(1)} = \lim_{t \to 0+} \phi^{(1)}(t)$ 
is proportional to the right-hand side of 
\eqref{eq:q-series-limiting-value}:
\begin{equation} \label{eq:b0-1}
b_0^{(1)} = \frac{e^{\frac{\pi i}{4}}}{\sqrt{2KP}}  \,
\sum_{\substack{k = 0 \\ K |\hspace{-.3em}/ k}}^{2PK-1}
e^{- \frac{\pi i}{2K P}k^2} \, 
\frac{
e^{N \frac{\pi i k}{K}}  - e^{-N \frac{\pi i k}{K}}}
{e^{\frac{\pi i k}{K}}  - e^{-\frac{\pi i k}{K}} }  
 \, 
\frac{\displaystyle \prod_{j=1}^{n} 
\left( e^{\frac{\pi i k}{K p_j}}  - e^{-\frac{\pi i k}{K p_j}} \right)}
{\displaystyle \left( e^{\frac{\pi i k}{K}}  - e^{-\frac{\pi i k}{K}} \right)^{n-2}}.
\end{equation}

\item[{\rm (ii)}] 
The function $\phi^{(2)}(t)$ also has an asymptotic expansion 
of the form
\begin{equation} \label{eq:asymptotic-phi-2}
\phi^{(2)}(t) \sim \sum_{r=0}^{\infty} b^{(2)}_{r} t^{r}
\end{equation}
when $t \rightarrow 0+$. Moreover, 
the limit value $b_{0}^{(2)} = \lim_{t \rightarrow 0+} \phi^{(2)}(t)$ 
coincides with that of $\phi^{(1)}(t)$:
\begin{equation} \label{eq:coincidence-of-leading}
b_{0}^{(1)} = b_{0}^{(2)}.
\end{equation}
\end{itemize}
\end{prop}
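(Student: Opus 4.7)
My plan follows the Lawrence--Zagier \cite{LZ} and Hikami \cite{Hikami04} template: reinterpret each $\phi^{(k)}(t)$ as a Lambert-type series weighted by an effectively periodic ``Gauss-sum character'', then extract the small-$t$ asymptotics via the standard Mellin / Hurwitz-zeta machinery. The starting observation is that the quadratic phase $e^{\pi i j^2/(2KP)}$ is periodic in $j \in \mathbb{Z}$ with period $2PK$, since $e^{\pi i (j+2PK)^2/(2KP)} = e^{\pi i j^2/(2KP)} e^{2\pi i j} e^{2\pi i PK} = e^{\pi i j^2/(2KP)}$.

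For part (i), I would change summation variable to $j := 2Pm + a_{\ell,\varepsilon}$, absorb the binomial $\binom{m+n-3}{n-3}$ into a polynomial of degree $n-3$ in $j$ via $m = (j - a_{\ell,\varepsilon})/(2P)$, and group the terms by residue class modulo $2PK$, obtaining
\begin{equation}
\phi^{(1)}(t) \;=\; \sum_{k=0}^{2PK-1} e^{\pi i k^2/(2KP)} \sum_{\substack{j \geq 0 \\ j \equiv k\,(2PK)}} F(j) \, e^{-jt},
\end{equation}
where $F$ encodes the signed combinatorics of the $\ell, \varepsilon$-sums together with the polynomial in $j$. The inner series, being a weighted Lerch-type series, admits a full asymptotic expansion from the standard formula $\sum_{j \geq 1} j^r e^{-jt} \sim \sum_{s \geq 0} \zeta(-r-s)(-t)^s/s!$, which yields \eqref{eq:asymptotic-phi} with leading coefficient $b_0^{(1)} = \sum_k e^{\pi i k^2/(2KP)} L(0, F_k)$. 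To match this with the explicit Gauss sum \eqref{eq:b0-1}, I would apply the quadratic Gauss-sum reciprocity (the $S$-transform of the Jacobi theta function) to convert the weights $e^{+\pi i k^2/(2KP)}$ into $e^{-\pi i k^2/(2KP)}$ with the overall factor $e^{\pi i/4}/\sqrt{2PK}$; under this Fourier/Gauss dualization, the signed $\ell$- and $\varepsilon$-sums assemble into the ratio $(e^{N\pi i k/K} - e^{-N\pi i k/K})/(e^{\pi i k/K} - e^{-\pi i k/K})$ times the product $\prod_j (e^{\pi i k/(K p_j)} - e^{-\pi i k/(K p_j)})/(e^{\pi i k/K} - e^{-\pi i k/K})^{n-2}$, while the restriction $K \nmid k$ reflects the excision of zeros of the sine denominators. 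Comparison with \eqref{eq:SU2-WRT-invariant} then completes part (i).

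Part (ii) proceeds by the same residue-class decomposition, the only change being that the inner series carries the weight $e^{-j^2 t}$; the relevant asymptotic formula $\sum_{j \geq 1} j^r e^{-j^2 t} \sim \sum_{s \geq 0} \zeta(-r - 2s)(-t)^s/s!$ shares the $t^0$ coefficient $\zeta(-r)$ with the $e^{-jt}$ case, so summing over $k$ gives $b_0^{(2)} = b_0^{(1)}$. I expect the principal obstacle to be the explicit combinatorial collapse in part (i): verifying that, after Gauss-sum reciprocity, the signed sums over $\ell$ and $\varepsilon$ together with the polynomial from the binomial precisely produce the trigonometric product of \eqref{eq:b0-1} and correctly yield the restriction $K \nmid k$. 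The remaining analytic and combinatorial steps are routine applications of standard tools in the theory of Eichler integrals.
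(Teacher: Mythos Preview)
Your overall template (Lawrence--Zagier / Hikami: periodicity of the quadratic phase, Mellin machinery) is correct, and your treatment of part (ii) matches the paper's almost exactly---both use that the associated Dirichlet series satisfy $L^{(2)}(s)=L^{(1)}(2s)$, so the $t^0$ coefficients agree.

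For part (i), however, the paper takes a different route, and your proposal has a genuine gap. The paper applies quadratic reciprocity \emph{before} letting $t\to 0^+$: multiplying $\phi^{(1)}(t)$ by the Gauss sum $G(KP)$ and inserting the reciprocity identity term-by-term swaps the sum over $m$ for a new sum over $k\pmod{2KP}$, and---crucially---the binomial $\binom{m+n-3}{n-3}$ then collapses via its generating function $\sum_{m\ge 0}\binom{m+n-3}{n-3}z^m=(1-z)^{-(n-2)}$ into the closed factor $(1-e^{2\pi i k/K}e^{-2Pt})^{-(n-2)}$. This is what produces the trigonometric denominator directly. Your plan to expand the binomial as a polynomial in $j$ and recombine after reciprocity is in principle workable but considerably more laborious, and you do not say how reciprocity is to be applied to the resulting Hurwitz-zeta values $L(0,F_k)$.

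The real gap is your handling of the terms with $K\mid k$. You write that the restriction $K\nmid k$ in \eqref{eq:b0-1} ``reflects the excision of zeros of the sine denominators''---but this is the statement to be proved, not an argument for it. After reciprocity, the summands with $K\mid k$ carry the factor $(1-e^{-2Pt})^{-(n-2)}$, a pole of order $n-2$ at $t=0$; showing that their contribution to the constant term $b_0^{(1)}$ nonetheless vanishes is a nontrivial combinatorial identity. In the paper this is Lemma \ref{lemm:vanishing-sums}, with its own proof in Appendix \ref{appendix:vanishing-sums}: for each $s\in\{0,\dots,n-2\}$ a certain signed sum over $\varepsilon\in\{\pm1\}^n$ weighted by $\bigl(\sum_j\varepsilon_j/p_j\bigr)^s$ and a Gauss sum vanishes, and the proof uses the Chinese remainder theorem in a nonobvious way. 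Without this cancellation, the asymptotic expansion of $\phi^{(1)}(t)$ would a priori start at order $t^{-(n-2)}$ rather than $t^0$, and \eqref{eq:b0-1} would not follow.
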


Consequently, Theorem \ref{thm:q-series-and-quantum-invariant} 
follows from the equalities \eqref{eq:b0-1} and
\eqref{eq:coincidence-of-leading}.
We will prove these statements in the rest of this section.

\subsubsection{Proof of Proposition \ref{prop:key-prop} (i)} 
\label{subsection:proof-of-prop-1}

To derive the formula \eqref{eq:b0-1}, we use the idea of 
Hikami \cite{Hikami04, Hikami04-2, Hikami06}.
In particular, we will use the following quadratic reciprocity formula. 

\begin{lem}[{e.g., \cite[Section 2]{Hikami04}}] ~
For any $M_1, M_2 \in {\mathbb Z}$ and $L \in {\mathbb Q}$ satisfying
$M_1 \cdot M_2 \in 2{\mathbb Z}$ and $M_1 \cdot L \in {\mathbb Z}$, we have 
\begin{equation} \label{eq:reciprocity-1}
\sum_{k\,\,{\rm mod}\,\,M_1} e^{\pi i \, \frac{M_2}{M_1} \, k^2 + 2 \pi i \, L k} 
= 
\sqrt{\left| \frac{M_1}{M_2} \right|} \,
e^{\frac{\pi i}{4} \, {\rm sign}(M_1 \cdot M_2)} 
\, \sum_{k\,\,{\rm mod}\,\,M_2} e^{- \pi i \, \frac{M_1}{M_2} \, (k+L)^2}.
\end{equation}
\end{lem}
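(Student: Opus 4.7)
The plan is to derive this identity (a twisted form of the Landsberg--Schaar reciprocity) from the modular transformation of the Jacobi theta function $\theta(z,\tau) := \sum_{n \in \Z} e^{\pi i n^2 \tau + 2\pi i n z}$, valid for ${\rm Im}\,\tau > 0$, together with a careful limiting procedure to extract the finite Gauss sums on both sides. The key input is the transformation formula
\begin{equation*}
\theta(z/\tau,-1/\tau) = \sqrt{-i\tau} \, e^{\pi i z^2/\tau} \, \theta(z,\tau),
\end{equation*}
itself a consequence of Poisson summation applied to a Gaussian. I specialize $\tau = M_2/M_1 + i\epsilon$ and $z = L$, and analyze the $\epsilon \to 0^+$ asymptotics of both sides of the modular identity.

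Consider $\theta(L,\tau)$ on the right. Under the hypotheses $M_1 M_2 \in 2\Z$ and $L M_1 \in \Z$, shifting $n \mapsto n + M_1$ changes the exponent $\pi i n^2 \tau + 2\pi i n L$ by $2\pi i n M_2 + \pi i M_1 M_2 + 2\pi i L M_1 + O(\epsilon)$, which lies in $2\pi i \Z + O(\epsilon)$. Writing $n = k + M_1 j$ with $k \in \{0,1,\ldots,M_1-1\}$ and $j \in \Z$ factorizes the series as
\begin{equation*}
\theta(L,\tau) = S_{\mathrm{left}} \cdot \sum_{j \in \Z} e^{-\pi \epsilon (k+M_1 j)^2} \cdot (1+O(\epsilon)),
\end{equation*}
where $S_{\mathrm{left}}$ is the left-hand side of the reciprocity formula. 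A second Poisson summation, applied to the real Gaussian $e^{-\pi \epsilon M_1^2 x^2}$, gives the inner-sum asymptotic $(M_1 \sqrt{\epsilon})^{-1}$, independent of $k$ in leading order.

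An entirely analogous analysis applies to $\theta(L/\tau,-1/\tau)$: the relevant period is $|M_2|$, guaranteed by the same hypotheses; completing the square in the exponent $-\pi i (M_1/M_2)(n-L)^2 + \pi i L^2 M_1/M_2 + O(\epsilon)$ (and using $k \mapsto -k$ to flip the sign of $L$ in the completed square) yields
\begin{equation*}
\theta(L/\tau,-1/\tau) = e^{\pi i L^2 M_1/M_2} \cdot S_{\mathrm{right}} \cdot (M_1\sqrt{\epsilon})^{-1}\cdot(1+O(\epsilon)),
\end{equation*}
where $S_{\mathrm{right}} = \sum_{k \bmod M_2} e^{-\pi i (M_1/M_2)(k+L)^2}$. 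Matching the leading asymptotics in the modular identity, the $(M_1\sqrt{\epsilon})^{-1}$ singularities and the phase $e^{\pi i L^2/\tau} \to e^{\pi i L^2 M_1/M_2}$ cancel on both sides, leaving $S_{\mathrm{right}} = \sqrt{-i\tau}\big|_{\epsilon = 0^+} \cdot S_{\mathrm{left}}$. Evaluating $\sqrt{-i\tau}\big|_{\tau = M_2/M_1 + i0^+}$ via analytic continuation from the upper half-plane gives $\sqrt{|M_2/M_1|}\, e^{-\pi i {\rm sign}(M_1 M_2)/4}$, and inverting produces the desired formula.

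The principal obstacle will be the careful sign bookkeeping in the branch of $\sqrt{-i\tau}$ as $\tau$ approaches the real axis from above: the phase $e^{\pi i {\rm sign}(M_1 M_2)/4}$ depends sensitively on the sign of $M_1 M_2$ and must be derived consistently with the chosen analytic continuation path. Secondary technical points are verifying that the $O(\epsilon)$ error terms in the periodicity claim do not disturb the matching of the leading singular coefficients, and confirming that the Gaussian $j$-sum asymptotic is uniform in $k$ with exponentially small corrections (a standard estimate via the Poisson-summation evaluation of Jacobi theta at purely imaginary arguments).
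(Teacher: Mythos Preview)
The paper does not actually prove this lemma; it is stated with a citation to \cite[Section~2]{Hikami04} and used as a known input. Your approach---deriving the twisted Landsberg--Schaar reciprocity from the modular transformation of the Jacobi theta function via the limit $\tau \to M_2/M_1 + i0^+$---is the standard textbook argument and is correct in outline.

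One notational slip worth tightening: the equation
\[
\theta(L,\tau) = S_{\mathrm{left}} \cdot \sum_{j \in \Z} e^{-\pi \epsilon (k+M_1 j)^2} \cdot (1+O(\epsilon))
\]
is not quite right as written, since the inner $j$-sum still depends on $k$ and so cannot be factored out before taking the asymptotic. What you need (and evidently intend, given your final paragraph) is
\[
\theta(L,\tau) = \sum_{k=0}^{|M_1|-1} e^{\pi i k^2 M_2/M_1 + 2\pi i k L}\Bigl(\sum_{j\in\Z} e^{-\pi\epsilon(k+M_1 j)^2}\Bigr)(1+O(\epsilon)),
\]
followed by the observation that the bracketed theta value equals $(|M_1|\sqrt{\epsilon})^{-1}$ up to exponentially small corrections \emph{uniformly in $k$}, after which $S_{\mathrm{left}}$ legitimately factors out. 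The same remark applies on the $-1/\tau$ side with period $|M_2|$; note also that the effective damping parameter there is $\epsilon M_1^2/M_2^2$, so the leading singularity is $(|M_2|\cdot|M_1/M_2|\sqrt{\epsilon})^{-1}=(|M_1|\sqrt{\epsilon})^{-1}$, matching the other side as required. With these clarifications and the branch computation for $\sqrt{-i\tau}$ you describe, the argument goes through.
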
 

If we set
\begin{equation}
G(KP) := \sum_{k\,\,{\rm mod}\,\, 2KP} e^{- \frac{\pi i}{2 KP}k^2},
\end{equation}
then we have
\begin{align}
G(KP) & = \sqrt{2KP} \, e^{- \frac{\pi i}{4}} \\ 
& = 
e^{- \frac{\pi i}{2KP} \tilde{L}^2} \,
\sum_{k\,\,{\rm mod}\,\, 2KP} 
e^{- \frac{\pi i}{2KP}  k^2 + \frac{\pi i}{KP} \tilde{L} k}. 
\label{eq:reciprocity-2}
\end{align}
We have applied the reciprocity formula for 
$M_1=2 KP, M_2 = -1, L = 0$ to obtain the first line, while 
$M_1=2 KP, M_2 = -1, L = \tilde{L}/(2KP)$ 
with an arbitrary integer $\tilde{L}$ to obtain the second line.

Keeping the formula in our mind,
let us compute the limit value $b_0^{(1)} = \lim_{t \to 0+} \phi^{(1)}(t)$. 
It follows from the definition of $\phi^{(1)}(t)$ that   
\begin{align}
& G(KP) \, \phi^{(1)}(t) \notag \\
& \quad = 
(-1)^n 
\sum_{\ell = - \frac{N-1}{2}}^{\frac{N-1}{2}} \,
\sum_{(\varepsilon_1, \dots, \varepsilon_n)} 
\varepsilon_1 \cdots \varepsilon_n 
 \notag \\
& \qquad \times ~~ \quad \sum_{m=0}^{\infty} \dbinom{m+n-3}{n-3} \, 
e^{\frac{\pi i}{2KP}(2Pm + a_{\ell, \varepsilon} )^2}  \, 
e^{- (2Pm + a_{\ell, \varepsilon}) \, t} \, 
\notag \\
& 
\qquad \times \quad 
e^{- \frac{\pi i}{2KP}(2Pm + a_{\ell, \varepsilon})^2} 
\sum_{k\,\,{\rm mod}\,\, 2KP} e^{- \frac{\pi i}{2KP}k^2
 + \frac{\pi i}{KP} (2Pm + a_{\ell, \varepsilon}) \, k}
\notag \\ 
& \quad = 
(-1)^n \sum_{k\,\,{\rm mod}\,\, 2KP} e^{- \frac{\pi i}{2KP}k^2} \, 
\sum_{\ell = - \frac{N-1}{2}}^{\frac{N-1}{2}} \,
\sum_{(\varepsilon_1, \dots, \varepsilon_n)} 
\varepsilon_1 \cdots \varepsilon_n \,
\frac{e^{\frac{\pi i k}{KP} a_{\ell,\varepsilon}} \, 
e^{- a_{\ell,\varepsilon} t} }
{\bigl( 1 - e^{\frac{2\pi i k}{K}} \, e^{-2 P t} \bigr)^{n-2}}
\end{align}
holds when ${\rm Re} \, t > 0$. 
We have used \eqref{eq:reciprocity-2} with $\tilde{L} = 2Pm + a_{\ell, \varepsilon}$
in the first equality.
 
To evaluate the limit value, we decompose the sum over $k$ into two parts:
\begin{align}
\phi_A^{(1)}(t) 
& := 
\frac{(-1)^n}{G(KP)} \, \sum_{\substack{k\,\,{\rm mod}\,\, 2KP \\ K |\hspace{-.3em}/ k}} 
e^{- \frac{\pi i}{2KP}k^2} \notag \\ 
& \quad \times \sum_{\ell = - \frac{N-1}{2}}^{\frac{N-1}{2}} \,
\sum_{(\varepsilon_1, \dots, \varepsilon_n)} 
\varepsilon_1 \cdots \varepsilon_n \, 
\frac{
e^{\frac{\pi i k}{KP} a_{\ell,\varepsilon}} \, 
e^{- a_{\ell,\varepsilon} t}
}
{\bigl( 1 - e^{\frac{2\pi i k}{K}} \, e^{-2 P t} \bigr)^{n-2}},
\label{eq:phi1-A} 
\\[+.5em]
\phi_B^{(1)}(t) 
& := 
\frac{(-1)^n }{G(KP)} \, \sum_{\substack{k\,\,{\rm mod}\,\, 2KP \\ K | k}} 
e^{- \frac{\pi i}{2KP}k^2} \notag \\
& \quad \times
\sum_{\ell = - \frac{N-1}{2}}^{\frac{N-1}{2}} \,
\sum_{(\varepsilon_1, \dots, \varepsilon_n)} 
\varepsilon_1 \cdots \varepsilon_n \,
\frac{
e^{\frac{\pi i k}{KP} a_{\ell,\varepsilon}} \,
e^{- a_{\ell,\varepsilon} t}
}
{\bigl( 1 - e^{\frac{2\pi i k}{K}} \, e^{-2 P t} \bigr)^{n-2}}
\notag \\
& = 
\frac{(-1)^n}{G(KP)} \, \frac{1}{\bigl( 1 - e^{-2 P t} \bigr)^{n-2}} 
\notag \\
& \quad \times 
\sum_{m\,\,{\rm mod}\,\,2P} 
e^{- \frac{\pi i K}{2P}m^2} \,
\sum_{\ell = - \frac{N-1}{2}}^{\frac{N-1}{2}} \,
\sum_{(\varepsilon_1, \dots, \varepsilon_n)} 
\varepsilon_1 \cdots \varepsilon_n \, 
e^{- a_{\ell,\varepsilon} t} \, 
e^{\frac{\pi i m}{P} a_{\ell, \varepsilon}}. 
\label{eq:phi1-B}
\end{align}  
Obviously, $\phi^{(1)}(t) = \phi_A^{(1)}(t) + \phi_B^{(1)}(t)$.

\bigskip
$\bullet$ \underline{Asymptotic behavior of $\phi_A^{(1)}(t)$.} \, 
It follows from the expression \eqref{eq:phi1-A} that 
$\phi_A^{(1)}(t)$ has the asymptotic expansion 
\begin{equation}
\phi_A^{(1)}(t) \sim \sum_{r = 0}^{\infty} b_{A,r}^{(1)} t^{r}
\end{equation}
as $t \to 0+$. A direct computation shows that the leading term is given by 
\begin{align}
b_{A,0}^{(1)} & = \lim_{t \to 0+} \phi_A^{(1)}(t) 
\notag \\
& = \frac{1}{G(KP)} \, 
\sum_{\substack{k\,\,{\rm mod}\,\, 2KP \\ K |\hspace{-.3em}/ k}} 
e^{- \frac{\pi i}{2KP}k^2} \notag \\
& \quad \times 
\sum_{\ell = - \frac{N-1}{2}}^{\frac{N-1}{2}} \,
\sum_{(\varepsilon_1, \dots, \varepsilon_n)} 
(-1)^n \varepsilon_1 \cdots \varepsilon_n \,
\frac{  e^{\frac{\pi i k}{K} \bigl(2 \ell + n-2 + 
\sum_{j=1}^{n}\frac{\varepsilon_j}{p_j} \bigr)} }
{\bigl( 1 - e^{\frac{2\pi i k}{K}} \bigr)^{n-2}} 
\notag \\
& = \frac{1}{G(KP)} \, \sum_{\substack{k\,\,{\rm mod}\,\, 2KP \\ K |\hspace{-.3em}/ k}} 
e^{- \frac{\pi i}{2KP}k^2} \,
\frac{
e^{N \frac{\pi i k}{K}}  - e^{-N \frac{\pi i k}{K}}}
{e^{\frac{\pi i k}{K}}  - e^{-\frac{\pi i k}{K}} }  \, 
\frac{\displaystyle \prod_{j=1}^{n} 
\left( e^{\frac{\pi i k}{K p_j}}  - e^{-\frac{\pi i k}{K p_j}} \right)}
{\displaystyle \left( e^{\frac{\pi i k}{K}}  - e^{-\frac{\pi i k}{K}} \right)^{n-2}}.
\label{eq:bA0-1}
\end{align}
Therefore, we have shown that the limit value $\lim_{t \to 0+} \phi_A^{(1)}(t)$ 
agrees with the right hand-side of \eqref{eq:b0-1}.

\bigskip
$\bullet$ \underline{Asymptotic behavior of $\phi_B^{(1)}(t)$.} \, 
Since $1/(1 - e^{-2 P t})^{n-2}$ has a pole of order $n-2$ at the origin, 
the expression \eqref{eq:phi1-B} implies that 
$\phi_B^{(1)}(t)$ has the asymptotic expansion of the following form:
\begin{equation} \label{eq:asymptotic-phi1-B}
\phi_B^{(1)}(t) \sim \sum_{r = -(n-2)}^{\infty} b_{B,r}^{(1)} t^{r}.
\end{equation}
Our task is to prove that the coefficients $b_{B,r}^{(1)}$
of non-positive powers of $t$ vanish. 

Using the quadratic reciprocity again (for $M_1 = 2P, M_2 = - K, 
L= \frac{a_{\ell,\varepsilon}}{2P} = \ell + \frac{n-2}{2} + 
\sum_{j=1}^{n}\frac{\varepsilon_j}{2p_j}$), 
we can modify the expression \eqref{eq:phi1-B} as follows:
\begin{align}
\phi_B^{(1)}(t) 
& = 
\frac{(-1)^n}{K} \, \frac{1}{\bigl( 1 - e^{-2 P t} \bigr)^{n-2}} \notag \\
& \quad \times 
\sum_{\ell = - \frac{N-1}{2}}^{\frac{N-1}{2}} \,
\sum_{(\varepsilon_1, \dots, \varepsilon_n)} 
\varepsilon_1 \cdots \varepsilon_n \, 
e^{- a_{\ell,\varepsilon} t} \, 
\sum_{m\,\,{\rm mod}\,\,K} 
e^{\frac{\pi i}{2KP}(2mP + a_{\ell,\varepsilon})^2}.
\end{align}
Therefore, the coefficient $b^{(1)}_{B, r}$ in \eqref{eq:asymptotic-phi1-B} 
is written by a linear combination of elements in 
\begin{equation}
\left\{  \sum_{\ell = - \frac{N-1}{2}}^{\frac{N-1}{2}} \,
\sum_{(\varepsilon_1, \dots, \varepsilon_n)} 
\varepsilon_1 \cdots \varepsilon_n \, a_{\ell, \varepsilon}^s \, 
\sum_{m\,\,{\rm mod}\,\,K} e^{\frac{\pi i}{2KP}\, ( 2Pm + a_{\ell, \varepsilon} )^2} ~;~
s \in \{0,1,\dots, r + n - 2  \}   \right\}.
\end{equation}

\begin{lem} \label{lemm:vanishing-sums}
For any $s \in \{0,1,\dots, n-2 \}$ and 
any $\ell \in \frac{1}{2} {\mathbb Z}$, 
we have
\begin{equation}
\sum_{(\varepsilon_1, \dots, \varepsilon_n) \in \{ \pm 1 \}^n} 
\varepsilon_1 \cdots \varepsilon_n \, 
\biggl( \sum_{j=1}^{n} \frac{\varepsilon_j}{p_j} \biggr)^s \, 
\sum_{m\,\,{\rm mod}\,\,K} e^{\frac{\pi i}{2KP}\, ( 2Pm + a_{\ell, \varepsilon} )^2}
= 0.
\end{equation}
\end{lem}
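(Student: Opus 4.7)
The plan is to combine two ingredients: the factorization identity
\begin{equation*}
\sum_{\varepsilon\in\{\pm 1\}^n}\varepsilon_1\cdots\varepsilon_n\,\bigl(\textstyle\sum_{j}\varepsilon_j/p_j\bigr)^{s}\,e^{x\sum_j\varepsilon_j/p_j}=2^n G^{(s)}(x),\qquad G(z):=\prod_{j=1}^n\sinh(z/p_j),
\end{equation*}
obtained by differentiating $s$ times the elementary identity $\sum_\varepsilon\prod_j\varepsilon_j\,e^{x\sum_j\varepsilon_j/p_j}=\prod_j(e^{x/p_j}-e^{-x/p_j})=2^nG(x)$; and the quadratic reciprocity \eqref{eq:reciprocity-1}. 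Applying reciprocity with $M_1=K$, $M_2=2P$ to the inner $m$-sum yields
\begin{equation*}
\sum_{m\,\mathrm{mod}\,K}e^{\pi i(2Pm+a_{\ell,\varepsilon})^2/(2KP)}=\sqrt{K/(2P)}\,e^{\pi i/4}\sum_{k\,\mathrm{mod}\,2P}e^{-\pi iKk^2/(2P)-\pi ik(2\ell+n-2+\sum_j\varepsilon_j/p_j)}.
\end{equation*}
Exchanging the two sums and invoking the factorization identity with $x=-\pi ik$, the lemma becomes equivalent to the $n-1$ identities
\begin{equation*}
\sum_{k\,\mathrm{mod}\,2P}e^{-\pi iKk^2/(2P)-\pi ik(2\ell+n-2)}\,G^{(s)}(-\pi ik)=0\qquad\text{for }s\in\{0,1,\ldots,n-2\}.
\end{equation*}

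Next, I would package these $n-1$ identities into the single generating-function statement that the formal series
\begin{equation*}
\Phi_\ell(w):=\sum_{k\,\mathrm{mod}\,2P}e^{-\pi iKk^2/(2P)-\pi ik(2\ell+n-2)}\,G(-\pi ik+w)
\end{equation*}
satisfies $\Phi_\ell(w)=O(w^{n-1})$ as $w\to 0$, which is equivalent via the Taylor series $G(-\pi ik+w)=\sum_{s\ge 0}G^{(s)}(-\pi ik)\,w^s/s!$. Using $\sinh((w-\pi ik)/p_j)=\cos(\pi k/p_j)\sinh(w/p_j)-i\sin(\pi k/p_j)\cosh(w/p_j)$, each factor of $G(-\pi ik+w)$ is a sum of an $O(w)$ and an $O(1)$ term. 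Expanding the $n$-fold product over subsets $A\subseteq[n]$ and tracking the order of vanishing in $w$ reduces the required statement to showing, for every proper $A\subsetneq[n]$ with $|A|\le n-2$, that
\begin{equation*}
\sum_{k\,\mathrm{mod}\,2P}e^{-\pi iKk^2/(2P)-\pi ik(2\ell+n-2)}\prod_{j\in A}\cos(\pi k/p_j)\prod_{j\in[n]\setminus A}\sin(\pi k/p_j)=0.
\end{equation*}

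Finally, expanding the trigonometric factors via $\cos x=\tfrac12(e^{ix}+e^{-ix})$ and $\sin x=\tfrac{1}{2i}(e^{ix}-e^{-ix})$ and reversing the reciprocity recasts each such sum, up to a nonzero prefactor, as $S:=\sum_{\eta\in\{\pm 1\}^n}\bigl(\prod_{j\in B}\eta_j\bigr)\Psi(a_{\ell,-\eta})$, where $B:=[n]\setminus A$ and $\Psi(a):=\sum_{m\,\mathrm{mod}\,K}e^{\pi i(2Pm+a)^2/(2KP)}$. From $a_{\ell,-\eta}=2P(2\ell+n-2)-a_{\ell,\eta}$ together with the evenness $\Psi(-a)=\Psi(a)$ and the $2P$-periodicity $\Psi(a+2P)=\Psi(a)$ one deduces $\Psi(a_{\ell,-\eta})=\Psi(a_{\ell,\eta})$; the substitution $\eta\to-\eta$ then gives $S=(-1)^{|B|}S$, which forces $S=0$ whenever $|B|$ is odd. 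The hardest part will be the case where $|B|$ is even and $|B|\ge 2$, for which the elementary parity symmetry becomes trivial: here I expect the argument to rely on a Chinese Remainder decomposition $\mathbb{Z}/2P\mathbb{Z}\cong\mathbb{Z}/2\times\prod_j\mathbb{Z}/p_j$ compatible with the factorization of the trigonometric product, together with finer cancellations intrinsic to the generalised Gauss sum $\Psi$.
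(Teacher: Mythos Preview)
Your reductions are carried out correctly up to the sums $S_B=\sum_\eta\bigl(\prod_{j\in B}\eta_j\bigr)\Psi(a_{\ell,\eta})$, and the parity argument disposing of odd $|B|$ is fine. But the case of even $|B|\ge 2$ is a genuine gap, and it is the heart of the lemma rather than a technicality. Your intuition that a Chinese--remainder argument is required is correct; what it has to deliver is precisely what the paper establishes directly: that $\Psi(a_{\ell,\varepsilon})$ is \emph{independent of $\varepsilon$}. The paper shows that for $\varepsilon,\tilde\varepsilon\in\{\pm 1\}^n$ differing in a single coordinate there is a bijection of $\{0,\dots,K-1\}$ matching the multisets $\{(2Pm+a_{\ell,\varepsilon})^2\bmod 4KP\}$ and $\{(2Pm+a_{\ell,\tilde\varepsilon})^2\bmod 4KP\}$. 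The bijection is built not from the CRT decomposition of $2P$ you suggest, but from a factorisation $K=K^{(1)}K^{(2)}$ with $\gcd(p_1,K^{(1)})=\gcd(p_2\cdots p_n,K^{(2)})=\gcd(K^{(1)},K^{(2)})=1$, by solving a pair of congruences modulo $K^{(1)}$ and $K^{(2)}$. Your own observation $\Psi(a_{\ell,-\eta})=\Psi(a_{\ell,\eta})$ is just the special case of this independence under the global sign flip; the even-$|B|$ sums require single-index flips, for which the CRT construction is unavoidable.

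Once the $\varepsilon$-independence of $\Psi$ is known, every $S_B$ with $B\ne\emptyset$ vanishes immediately since $\sum_\eta\prod_{j\in B}\eta_j=0$. More to the point, your whole detour through reciprocity, the generating function $\Phi_\ell(w)$, and reciprocity back then becomes unnecessary: the original sum in the lemma already factors as $\Psi(a_{\ell,\varepsilon^{(0)}})\cdot\sum_\varepsilon\varepsilon_1\cdots\varepsilon_n\bigl(\sum_j\varepsilon_j/p_j\bigr)^s$, and the second factor vanishes for all $s\le n-1$ by a short induction on $n$ (this is how the paper concludes, and it even yields the stronger range $s\le n-1$). In effect, applying reciprocity forward and then backward lands you on weighted sums over $\Psi(a_{\ell,\eta})$ of the same shape as the starting point; the only substantive step --- the CRT-based $\varepsilon$-independence of $\Psi$ --- is untouched by the intermediate manipulations.
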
 
We will give a proof of Lemma \ref{lemm:vanishing-sums} 
in Appendix \ref{appendix:vanishing-sums}. 

Lemma \ref{lemm:vanishing-sums} implies that 
the coefficients of non-positive powers of $t$ in 
\eqref{eq:asymptotic-phi1-B} vanish:
\begin{equation}
b_{B,-(n-2)}^{(1)} = \cdots = 
b_{B,-1}^{(1)} = b_{B,0}^{(1)} = 0.
\end{equation}
This guarantees the existence of the asymptotic expansion 
\eqref{eq:asymptotic-phi} of $\phi^{(1)}(t)$. 
Moreover, since $b_{0}^{(1)} = b_{A,0}^{(1)} + b_{B,0}^{(1)} = b_{A,0}^{(1)}$,
the desired equality \eqref{eq:b0-1} follows from \eqref{eq:bA0-1}.
This completes the proof of Proposition \ref{prop:key-prop} (i).

\subsubsection{Proof of Proposition \ref{prop:key-prop} (ii)} 
\label{subsection:proof-of-prop-2}

We employ the idea of \cite[Section 3]{LZ}.
To derive the asymptotic property \eqref{eq:asymptotic-phi-2} of $\phi^{(2)}(t)$, 
we consider the Mellin transforms of $\phi^{(1)}(t)$ and $\phi^{(2)}(t)$.

For $k=1,2$, we introduce
\begin{align}
L^{(k)}(s) & := (-1)^n  
\sum_{\ell = - \frac{N-1}{2}}^{\frac{N-1}{2}} \,
\sum_{(\varepsilon_1, \dots, \varepsilon_n)} 
\varepsilon_1 \cdots \varepsilon_n \,
\notag \\ & \quad  \times
\sum_{m=0}^{\infty} \dbinom{m+n-3}{n-3} \,
e^{\frac{\pi i}{2KP}(2Pm + a_{\ell,\varepsilon})^2}  \,
\bigl( 2Pm + a_{\ell,\varepsilon} \bigr)^{-ks} 
\label{eq:Dirichlet-L-series}
\end{align}
The asymptotic property \eqref{eq:asymptotic-phi} of $\phi^{(1)}(t)$
enables us to show that $L^{(1)}(s)$ defines an entire function of $s$, 
and its special value at $s=0$ is
\begin{equation} \label{eq:L1-at-origin}
L^{(1)}(0) = b_0^{(1)}.
\end{equation}
(See \cite[Section 7]{Zagier81}.)
On the other hand, since $L^{(2)}(s) = L^{(1)}(2s)$ by their definitions, we have
\begin{equation}
\phi^{(2)}(t) = \frac{1}{2 \pi i} 
\int^{+\delta+i \infty}_{+\delta - i \infty} 
\Gamma(s) \, L^{(1)}(2s) \, t^{-s} \,ds.
\end{equation}
Here $\delta$ is any positive real number, and the 
integration is taken along a line which is parallel to the imaginary axis. 
By moving this contour to the left across the simple poles at 
$s=0, -1, -2, \dots$, we obtain the asymptotic expansion 
\begin{equation}
\phi^{(2)}(t) \sim \sum_{r = 0}^{\infty} 
\left( \Res_{s=-r} \Gamma(s) \, L^{(1)}(2s) \, t^{-s} \,ds \right) \, t^{r}
= \sum_{r = 0}^{\infty} \frac{(-1)^r}{r!} L^{(1)}(-2r) \,  t^{r}
\end{equation}
when $t \rightarrow +0$. 
Thus we obtain \eqref{eq:asymptotic-phi} of $\phi^{(2)}(t)$. 
Moreover, the last formula implies that 
\begin{equation}
b_0^{(2)} = \lim_{t \rightarrow 0+} \phi^{(2)}(t) = L^{(1)}(0) = b_{0}^{(1)}.
\end{equation}
Here we have used \eqref{eq:L1-at-origin}.
This completes the proof of (ii) in Proposition \ref{prop:key-prop}, 
and hence, Theorem \ref{thm:q-series-and-quantum-invariant} is proved.

\section{$q$-difference equation for the WRT function and its classical limit}
\label{section:q-diffeence-WRT-invariant}

In this section, we obtain an explicit $q$-difference equation satisfied with 
the WRT function $\wf$ for the Seifert loop $X(p_1/q_1,\ldots,p_n/q_n)=(M,L)$. 
Moreover, we show that the classical limit of the $q$-difference equation 
is a component of the algebraic curve defined as the zero locus of 
the $A$-polynomial of the Seifert loop.
(See \cite{CCGLS94} for the definition of $A$-polynomial.)

\subsection{$q$-difference equation satisfied by the WRT function}
For a general family $F=\{ F(q;N) \}_{N}$ of $q$-series parametrized by 
positive integers $N$, 
we define $q$-difference operators $\qm, \ql$ by  
\begin{align}
(\qm F)(q;N)& :=q^{N/2} F(q;N), \\
(\ql F)(q;N)& :=F(q;N+1).
\end{align}
These operators satisfy the $q$-commutation relation:
\begin{equation}
{\ql \qm} = q^{\frac{1}{2}} {\qm \ql}. 
\end{equation}

\begin{thm}\label{q-difference}
The family $\Phi=\{ \wf \}_{N}$ of the WRT functions parameterized by 
the color $N$ satisfies the following $q$-difference equation:
\begin{equation} \label{eq:q-difference-equation-WRT}
\left[ \ql^3-q^{- \frac{P}{2}}\frac{C(q \, \qm)}{C(q^{\frac{1}{2}} \qm)}\ql^2-q^{-2P}
\qm^{-2P} \ql 
+q^{-\frac{3P}{2}}\frac{C(q \, \qm)}{C(q^{\frac{1}{2}} \qm)}\qm^{-2P}  \right] \Phi=0.
\end{equation}
Here, we set 
\begin{align}
C({\mathfrak m}):= 
\sum_{(\varepsilon_1, \dots, \varepsilon_n) \in \{\pm 1 \}^n} 
\varepsilon_1 \cdots \varepsilon_n \,
\sum_{m=0}^{\infty} \dbinom{m+n-3}{n-3} \, 
q^{\frac{a_{m,\varepsilon}^2}{4P}} \, 
\Bigl( 
{\mathfrak m}^{a_{m,\varepsilon}} + {\mathfrak m}^{-a_{m,\varepsilon}}
\Bigr).
\end{align}
(We remind the readers that 
$a_{m,\varepsilon} = P(2m+n-2+\sum_{j=1}^{n}(\varepsilon_j/p_j)) \in {\mathbb Z}$ 
was given in \eqref{eq:a-ell-epsilon}.)
\end{thm}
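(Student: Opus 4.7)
The plan is to reduce the operator identity \eqref{eq:q-difference-equation-WRT} to a clean statement about the truncated-theta sum defining $\wf$. First I would write
\begin{equation*}
\wf = D(N)\,\Psi(q;N),\qquad D(N) := \frac{(-1)^n\, q^{-\Theta_0/4 + P/4}}{2(q^{1/2}-q^{-1/2})}\,q^{-N^2 P/4},
\end{equation*}
so that the remaining quadruple sum $\Psi(q;N)$ depends on $N$ only through the range $\ell\in\{-(N-1)/2,\ldots,(N-1)/2\}$ of the outer summation. Applying the operators of $\hat{A}$ to $\wf$ and using $\ql\qm = q^{1/2}\qm\ql$ to reorder, the claimed equation becomes
\begin{equation*}
\Phi(q;N+3) - q^{-(N+2)P}\Phi(q;N+1) = q^{-P/2}\frac{C(q^{N/2+1})}{C(q^{N/2+1/2})}\bigl[\Phi(q;N+2) - q^{-(N+1)P}\Phi(q;N)\bigr].
\end{equation*}

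Next, the elementary identities $D(N+3)/D(N+1) = q^{-(N+2)P}$ and $D(N+2)/D(N) = q^{-(N+1)P}$ cancel exactly the explicit powers of $q$ inside each bracket. This reduces the claim to
\begin{equation*}
\Psi(q;N+3) - \Psi(q;N+1) = q^{\frac{(2N+3)P}{4}}\,\frac{C(q^{N/2+1})}{C(q^{N/2+1/2})}\bigl[\Psi(q;N+2) - \Psi(q;N)\bigr].
\end{equation*}

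The crucial observation is that shifting $N\to N+2$ in $\Psi(q;N)$ only enlarges the $\ell$-range, so $\Psi(q;N+2) - \Psi(q;N)$ is exactly the sum of the two new boundary contributions $\ell = \pm(N+1)/2$. Expanding
\begin{equation*}
\tfrac{P}{4}\bigl(2m \pm (N+1) + n-2 + \textstyle\sum_j \varepsilon_j/p_j\bigr)^2 = \tfrac{a_{m,\varepsilon}^2}{4P} \pm \tfrac{(N+1)\,a_{m,\varepsilon}}{2} + \tfrac{P(N+1)^2}{4}
\end{equation*}
and recognizing $q^{(N+1)/2} = q^{1/2}\cdot q^{N/2}$, the definition of $C$ produces
\begin{equation*}
\Psi(q;N+2) - \Psi(q;N) = q^{P(N+1)^2/4}\, C(q^{N/2+1/2}).
\end{equation*}
The same argument with $N$ replaced by $N+1$ (so that $q^{(N+2)/2} = q\cdot q^{N/2}$) gives $\Psi(q;N+3) - \Psi(q;N+1) = q^{P(N+2)^2/4}\,C(q^{N/2+1})$. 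Dividing yields the factor $q^{P((N+2)^2 - (N+1)^2)/4} = q^{(2N+3)P/4}$ multiplying $C(q^{N/2+1})/C(q^{N/2+1/2})$, matching the reduced identity exactly.

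Since each step is a direct computation, there is no substantial technical obstacle; the conceptual point is that the boundary-$\ell$ contributions to $\Psi(q;N+2)-\Psi(q;N)$ assemble naturally into the symmetrized series $C(q^{1/2}\cdot q^{N/2})$, which is precisely how the auxiliary series $C$ must appear in the coefficients of $\hat{A}(\qm,\ql;q)$.
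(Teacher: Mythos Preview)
Your proof is correct and takes essentially the same route as the paper: both hinge on the observation that $\Psi(q;N+2)-\Psi(q;N)$ consists of the two boundary terms $\ell=\pm(N+1)/2$, which assemble into $q^{P(N+1)^2/4}C(q^{1/2}\qm)$, and then eliminate the resulting inhomogeneity to obtain the third-order homogeneous relation. The only cosmetic difference is that the paper first records the inhomogeneous second-order equation $\Phi(N+2)=q^{-P(N+1)}\Phi(N)+\widetilde{C}(N)$ and then cross-multiplies two shifted copies, whereas you verify the operator identity directly by computing the ratio of consecutive differences; the underlying computation is identical.
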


\begin{proof}
First, for an easier description, we write $\Phi(N) := \Phi(q;N)$ 
and introduce 
\begin{align}
D(N):&= (-1)^n \,
\frac{q^{-\frac{1}{4}(\Theta_0+(N^2-1)P)}}{2(q^{\frac{1}{2}} - q^{- \frac{1}{2}})}, 
 \\
R(\ell ):&=\sum_{(\varepsilon_1, \dots, \varepsilon_n) \in \{\pm 1 \}^n} 
\varepsilon_1 \cdots \varepsilon_n 
\sum_{m=0}^{\infty} \dbinom{m+n-3}{n-3} 
q^{\frac{1}{4P} (2 \ell P + a_{m, \varepsilon})^2}.
\end{align}
It follows from the definition of $\Phi(N)$ that
\begin{equation}
\Phi(N) = D(N) \, \sum_{\ell = - \frac{N-1}{2}}^{\frac{N-1}{2}} R(\ell)
\end{equation}
holds. Therefore, we have
\begin{align}
\frac{\Phi(N+2)}{D(N+2)} 
=\sum_{\ell = - \frac{N+1}{2}}^{\frac{N+1}{2}} R(\ell ) 
=\frac{\Phi(N)}{D(N)}+R\Bigl(\frac{N+1}{2} \Bigr) 
+ R\Bigl( - \frac{N+1}{2} \Bigr).
\end{align}
Introducing
\begin{equation}
\widetilde{C}(N):=D(N+2) \left( R\Bigl( \frac{N+1}{2} \Bigr) 
+ R\Bigl( - \frac{N+1}{2} \Bigr) \right), 
\end{equation}
we get a second order inhomogeneous $q$-difference equation 
\begin{equation} \label{eq:WRT-function-first-order-raletion}
\Phi(N+2)=q^{-P(N+1)} \Phi (N)+\widetilde{C}(N).
\end{equation}
Here we have used $D(N+2)/D(N) = q^{-P(N+1)}$.
To obtain a homogeneous $q$-difference equation satisfied by $\Phi$, 
we subtract both sides of
\begin{equation} \label{eq:C-Phi}
\widetilde{C}(N)\Phi(N+3)=q^{-P(N+2)} 
\widetilde{C}(N)\Phi (N+1)+\widetilde{C}(N)\widetilde{C}(N+1)
\end{equation}
from both sides of
\begin{equation}
\widetilde{C}(N+1)\Phi(N+2)=q^{-P(N+1)} 
\widetilde{C}(N+1)\Phi (N)+\widetilde{C}(N+1)\widetilde{C}(N).
\end{equation}
Then, we get 
\begin{align}
\label{eq:pre-q-difference-equation}
& \widetilde{C}(N+1)\Phi(N+2)-\widetilde{C}(N)\Phi(N+3) \notag \\
&\qquad =q^{-P(N+1)} \widetilde{C}(N+1) \Phi (N)
-q^{-P(N+2)} \widetilde{C}(N)\Phi (N+1).
\end{align}
Since 
\begin{align}
& R\Bigl( \frac{N+1}{2} \Bigr) 
+ R\Bigl( - \frac{N+1}{2} \Bigr) 
\notag \\ 
& \quad 
= q^{\frac{P}{4} (N+1)^2}
\sum_{(\varepsilon_1, \dots, \varepsilon_n)} 
\varepsilon_1 \cdots \varepsilon_n 
\sum_{m=0}^{\infty} \dbinom{m+n-3}{n-3} \, 
q^{\frac{a_{m,\varepsilon}^2}{4P}} \, 
\Bigl( 
q^{\frac{a_{m,\varepsilon}(N+1)}{2}} 
+ q^{- \frac{a_{m,\varepsilon}(N+1)}{2}}
\Bigr),
\end{align}
we have
\begin{align}
\frac{\widetilde{C}(N+1)}{\widetilde{C}(N)} 
= q^{- \frac{P}{2}} \left[\frac{C(q \,{\mathfrak m})}{C(q^{\frac{1}{2}} {\mathfrak m})} 
\right]_{{\mathfrak m}=q^{N/2}}.
\end{align}
Thus, we may express the $q$-difference equation 
\eqref{eq:pre-q-difference-equation} by using the operators $\qm$ and $\ql$. 
Consequently, we have the desired equality \eqref{eq:q-difference-equation-WRT}.
This completes the proof of Theorem \ref{q-difference}. 
\end{proof}

\begin{rem}
As is shown in \cite[Proposition 5, Theorem 6]{Hikami-difference}, 
a simplification happens to the $q$-difference equation 
when $n=2$ and one of $p_1$ or $p_2$ equals to $2$.
Namely, the WRT function for $n=2$ and $(p_1, p_2) = (2, 2k+1)$
(or the colored Jones polynomial for the $(2, 2k+1)$-torus knot) 
satisfies a first order relation
\begin{equation}
\Phi(N) = - q^{(k+\frac{1}{2})(1 - 2N)} \Phi(N-1)
+ q^{(k+\frac{1}{2})(1-N)} 
\frac{q^{\frac{2N-1}{2}}-q^{-\frac{2N-1}{2}}}{q^{\frac{1}{2}}-q^{-\frac{1}{2}}}
\end{equation}
instead of \eqref{eq:WRT-function-first-order-raletion} 
(c.f., \cite[eq.\,(9)]{Hikami-difference}). 
Thus, we may verify that the WRT function satisfies a second order 
$q$-difference equation 
\begin{align} 
\left[ \ql^2 + 
\biggl( q^{- 3( k + \frac{1}{2})} \qm^{-2(2k+1)}
 -q^{- (k + \frac{1}{2})} \, 
\frac{C'(q^{\frac{3}{2}} \qm)}{C'(q^{\frac{1}{2}} \qm)}
\biggr) \ql 
-  \qm^{-2(2k+1)}
\frac{C'(q^{\frac{3}{2}} \qm)}{C'(q^{\frac{1}{2}} \qm)}
 \right] \Phi=0, 
 \label{eq:q-difference-equation-WRT-degenerate}
\end{align}
where we have set $C'({\mathfrak m}) := {\mathfrak m} - {\mathfrak m}^{-1}$.
\end{rem}

\subsection{Classical limit and $A$-polynomial}

Let $\hat{A}(\hat{\mathfrak m}, \hat{\mathfrak l}; q)$ be
the $q$-difference operator, appearing in \eqref{eq:q-difference-equation-WRT}, 
which annihilates the WRT function $\Phi(q;N)$.
The classical limit of the $q$-difference operator 
$\hat{A}(\hat{\mathfrak m}, \hat{\mathfrak l}; q)$ is the algebraic curve 
in ${\mathbb C}^\ast \times {\mathbb C}^\ast$ defined by the equation 
$\hat{A}({\mathfrak m}, {\mathfrak l}; q=1) = 0$. More explicitly,
\begin{equation} 
\label{eq:A-polynomial}
{\mathfrak l}^3-{\mathfrak l}^2 -{\mathfrak m}^{-2P}{\mathfrak l} + {\mathfrak m}^{-2P}
=({\mathfrak l}-1)({\mathfrak l}-{\mathfrak m}^{-P})({\mathfrak l}+{\mathfrak m}^{-P})=0.
\end{equation}
Here $({\mathfrak m}, {\mathfrak l})$ is a coordinate of 
${\mathbb C}^\ast \times {\mathbb C}^\ast$. 
On the other hand, the classical limit of 
\eqref{eq:q-difference-equation-WRT-degenerate}, 
which corresponds to a degenerate situation 
$n=2$ and $(p_1, p_2)=(2,2k+1)$, is given by 
\begin{equation} 
\label{eq:A-polynomial-degenerate}
{\mathfrak l}^2 + ({\mathfrak m}^{-2(2k+1)}-1) {\mathfrak l} 
- {\mathfrak m}^{-2(2k+1)}  = 
({\mathfrak l}-1)({\mathfrak l} + {\mathfrak m}^{-2(2k+1)}) = 0.
\end{equation} 
Here we show that the following statement, 
which is closely related to the AJ-conjecture 
\cite{Garoufalidis, Gukov04} for a knot in $S^3$. 

\begin{thm}\label{AJ}
The classical limit \eqref{eq:A-polynomial} 
(resp., \eqref{eq:A-polynomial-degenerate}) 
of the $q$-difference equation 
\eqref{eq:q-difference-equation-WRT}
(resp., \eqref{eq:q-difference-equation-WRT-degenerate})
is a component of the zero locus of the $A$-polynomial of the Seifert loop $\sl$.
\end{thm}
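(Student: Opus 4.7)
The plan is to read \eqref{eq:A-polynomial} and \eqref{eq:A-polynomial-degenerate} off the $SL(2,\mathbb{C})$-character variety of the knot exterior $E := M\setminus \mathrm{int}\,N(L)$, exploiting the fact that $L$ is a regular fiber of the Seifert fibration of $M$. First, since the fibration restricts to one on $E$ whose base orbifold is the disk with $n$ cone points of orders $p_1,\dots,p_n$, one obtains the presentation
\begin{equation*}
\pi_1(E)=\langle x_1,\dots,x_n, h \mid [x_i,h]=1,\; x_i^{p_i} h^{q_i}=1\; (i=1,\dots,n)\rangle,
\end{equation*}
in which the regular fiber $h$ is central and the meridian of $L$ reads $\mu = x_1\cdots x_n\cdot h^{b}$ for an integer $b$ fixed by the homology sphere condition \eqref{eq:integral-homology-condition}.

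Next I would locate the $0$-framed longitude $\lambda$ inside $\pi_1(E)$. Abelianizing the Seifert relations and using \eqref{eq:integral-homology-condition}, one sees $H_1(E)\cong \mathbb{Z}\langle\mu\rangle$ with $[h]=\pm P\,[\mu]$ (the sign being dictated by the Euler number $e=\mp 1/P$), so the unique null-homologous lift of $h$ is $\lambda = h\cdot\mu^{\mp P}$. With $\mu,\lambda$ in hand, I would restrict the $SL(2,\mathbb{C})$-character variety $\mathcal{X}(\pi_1(E))$ to the boundary torus. Abelian representations kill $\lambda$ and contribute the curve $\mathfrak{l} = 1$. On each irreducible component, Schur's lemma forces $\rho(h)=\varepsilon I$ with $\varepsilon\in\{\pm 1\}$, hence $\rho(\lambda)=\varepsilon\,\rho(\mu)^{\mp P}$, and reading off the eigenvalue of the meridian gives $\mathfrak{l}=\varepsilon\,\mathfrak{m}^{\mp P}$.

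Together these exhaust the factors in \eqref{eq:A-polynomial}: the factor $(\mathfrak{l}-1)$ comes from abelian representations, and $(\mathfrak{l}\mp\mathfrak{m}^{-P})$ come from irreducibles with $\varepsilon=\pm 1$, after normalizing orientation so that the exponent appears as $-P$. Positive-dimensional families of irreducibles realizing a given sign $\varepsilon$ are produced by picking each $\rho(x_i)\in SL(2,\mathbb{C})$ with eigenvalues $\eta_i,\eta_i^{-1}$ satisfying $\eta_i^{p_i}=\varepsilon^{-q_i}$ and arranging that the $\rho(x_i)$ share no common eigenvector; this gives, for each $\varepsilon$, a curve in $\mathcal{X}(\pi_1(E))$ whose image on the boundary torus is exactly $\mathfrak{l}=\varepsilon\,\mathfrak{m}^{-P}$.

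The main obstacle is the admissibility of the sign $\varepsilon=+1$ when some $p_i=2$: in $SL(2,\mathbb{C})$ the equation $A^{2}=I$ forces $A=\pm I$, so if $n=2$ and $p_1=2$ the choice $\varepsilon=+1$ collapses the whole representation to an abelian one and the factor $(\mathfrak{l}-\mathfrak{m}^{-P})$ drops out of the $A$-polynomial, which is precisely the degenerate picture \eqref{eq:A-polynomial-degenerate}. In the setting of \eqref{eq:A-polynomial} at least two non-central generators remain free, which is enough to realize irreducibles for both signs, and a direct case inspection (parallel to the well-known analysis for Seifert-fibered knot groups) finishes the proof.
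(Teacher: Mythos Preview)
Your argument is correct and follows essentially the same route as the paper: present $\pi_1(E)$ via the Seifert structure, locate the $0$-framed longitude by abelianizing and using \eqref{eq:integral-homology-condition}, invoke Schur's lemma to force the central fiber class to $\pm I$ on irreducibles, and read off $\mathfrak{l}=\pm\mathfrak{m}^{-P}$ together with the abelian factor $\mathfrak{l}=1$. The only notable difference is one of packaging: the paper delegates the $n=2$ case wholesale to Hikami \cite{Hikami-difference} and cites Auckly \cite{A} for the existence of the irreducible representations $\rho_\pm$, whereas you sketch both points directly --- in particular your explanation of why the factor $(\mathfrak{l}-\mathfrak{m}^{-P})$ disappears when $n=2$ and some $p_i=2$ (since $A^2=I$ forces $A=\pm I$ in $SL(2,\mathbb{C})$, collapsing the representation to an abelian one) is a nice self-contained replacement for the citation. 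Your explicit construction of positive-dimensional families of irreducibles is also a useful addition, since strictly speaking one needs a one-parameter family (not merely a single $\rho_\pm$) to conclude that each linear factor actually contributes a component to the $A$-polynomial.
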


\begin{proof}
The case $n = 2$ (including the degenerate case 
\eqref{eq:q-difference-equation-WRT-degenerate}) 
was proved by Hikami (\cite{Hikami-difference}). 
In what follows, we consider the case $n \ge 3$.

As in \cite[Section 7]{Beasley09}, 
the fundamental group 
$ \pi_1 (M \setminus L) $
of the Seifert loop 
is generated by
\begin{equation} \label{eq:generator-pi1}
c_j ~(j = 1, \dots, n),  ~
\mu, ~
f
\end{equation}
with the following relations:
\begin{align} 
\label{eq:relation-pi1-1}
& c_j^{p_j} f^{q_j} = 1, \\
\label{eq:relation-pi1-2}
& [\mu, f] = 1, \\
\label{eq:relation-pi1-3}
& \prod_{j=1}^{n} c_j = \mu.
\end{align}
Here, $c_j$'s correspond to small one-cycles around 
each of the orbifold points on the base sphere,
$\mu$ corresponds to the meridian element of $L$, 
and $f$ corresponds to a generic fiber which is represented by $L$.

Let $\lambda \in \pi_1(M \setminus L)$ be 
the longitude element of $L$. 
First we show the relation 
\begin{equation} \label{eq:relation-character-variety}
\lambda = f \mu^P
\end{equation}
of the fundamental group of the Seifert loop as follows. 

We find that there exists some integer $t$ such that 
\begin{equation} 
\lambda = f \mu^t
\end{equation}
because the longitude element $\lambda$ 
differs from the fiber element $f$ only with the framing. 
Then we have the relation of the homology group  
\begin{equation} \label{eq:a-priori-relation}
0=[f]+t[\mu]
\end{equation}
because $[\lambda]=0$ by the definition of the longitude element.
Here we have used the symbol $[\bullet]$ for the element in the homology group 
which is represented by $\bullet$. 
On the other hand, the relations 
\eqref{eq:relation-pi1-1} and \eqref{eq:relation-pi1-3} gives the relations
\begin{align}
& p_j[c_j]+q_j[f]=0 \label{eq:relation-homology-1}   \\
& \sum_{j=1}^{n} [c_j]=[\mu] \label{eq:relation-homology-2}  
\end{align}
in the homology group of the Seifert loop.
Therefore we have
\begin{equation} \label{eq:compute-homology-relation-Seifert}
P[\mu]=P \sum_{j=1}^{n} [c_j] 
= - P \, \sum_{j=1}^{n} \frac{q_j }{p_j}  \, [f] 
= - [f].  
\end{equation}
Here, we use \eqref{eq:relation-homology-2} in the first equality, 
\eqref{eq:relation-homology-1} in the second equality, 
and \eqref{eq:integral-homology-condition} 
in the last equality.
Thus we can conclude that the number $t$ in \eqref{eq:a-priori-relation} 
is equal to $P$, and hence, we have proved the relation 
\eqref{eq:relation-character-variety}. 

Second, let us consider an irreducible $2$-dimensional representation $\rho$ 
of the fundamental group of the Seifert loop. 
According to a discussion in \cite{A}, 
we have two irreducible representations $\rho_{+}$ and $\rho_{-}$ satisfying 
$\rho_{+}(f)={\rm Id}$ and $\rho_{-}(f)=-{\rm Id}$, respectively.
Then, it follows from \eqref{eq:relation-character-variety} that 
\begin{equation}
\rho_{+}(\lambda)=\rho_{+}(\mu)^P, \quad 
\rho_{-}(\lambda)=-\rho_{-}(\mu)^P
\end{equation} 
hold for these irreducible representations. 
Therefore, if we denote by ${\mathfrak m}$ (resp., ${\mathfrak l}$) 
one of the eigenvalues of the representation along the meridian $\mu$
(resp., longitude $\lambda$)  of the boundary torus of $M \setminus L$, 
we get the relation ${\mathfrak l}={\mathfrak m}^P$ for $\rho_{+}$, 
and ${\mathfrak l}=-{\mathfrak m}^P$ for $\rho_{-}$. 
Adding the contribution ${\mathfrak l}-1$ from the reducible part, we find the relation  
\begin{equation}
({\mathfrak l}-1)({\mathfrak l}-{\mathfrak m}^P)({\mathfrak l}+{\mathfrak m}^P)=0
\end{equation}
holds. This completes the proof of Theorem \ref{AJ}.
\end{proof}

\section{Resurgent analysis}
\label{section:resurgence}

In this section, for any fixed $N \in {\mathbb Z}_{\ge 1}$, 
we show that the WRT function $\Phi(q; N)$ 
is obtained as the average of the Borel sums (that is, the median sum) 
of the perturbative part $Z^{\rm pert}$ of the WRT invariant. 
We will employ the idea of Costin--Garoufalidis \cite{CG11} 
and Gukov--Marin\~o--Putrov \cite{GMP16}; see also \cite{Chun17, Chung18}. 
We refer \cite{Costin, Sauzin} for the fundamental facts on 
the Borel summation method and the resurgent analysis. 

\subsection{Perturbative part and its Borel transform}

First, for any fixed $N \in {\mathbb Z}_{\ge 1}$, 
let us introduce a function on $\{\kappa \in {\mathbb C}~;~{\rm Re}\, \kappa > 0 \}$ 
given by 
\begin{equation} \label{eq:trivial-connection-part-kappa}
Z_{\rm triv}(\kappa) := 
\frac{B}{2 \pi i} \, e^{- \frac{\pi i}{2\kappa} 
\bigl( \Theta_0 + (N^2 - 1) P \bigr)} I(\kappa), 
\end{equation} 
where 
\begin{equation} \label{eq:integral-part-kappa}
I(\kappa) := 
\int_{{\mathbb R}  e^{\frac{\pi i}{4}}} 
e^{\kappa g(y)} F_N(y) \, dy.
\end{equation}
(See Section \ref{section:WRT-invariant} for the definitions of $B$, $g(y)$ and $F_N(y)$. 
We drop the $N$-dependences from arguments since it is not relevant in this section.)
Note that, when we restrict $\kappa = K \in {\mathbb Z}_{\ge 1}$, 
the function $Z_{\rm triv}(\kappa)$ coincides with the one defined 
in \eqref{eq:trivial-connection-part} 
(i.e., the contribution from trivial connection to the WRT invariant).
In other words, the integer valued parameter $K$ in \eqref{eq:trivial-connection-part} 
is upgraded as a complex valued parameter $\kappa$ in \eqref{eq:trivial-connection-part-kappa}.

Let us define the formal power series
\begin{equation}
Z^{\rm pert}(\kappa) := \sum_{m=0}^{\infty} a_m \kappa^{-m - \frac{1}{2}} 
\in \kappa^{-\frac{1}{2}} \, {\mathbb C}[\hspace{-.13em}[\kappa^{-1}]\hspace{-.13em}], 
\end{equation}
which we call the perturbative part of the WRT invariant, 
as the asymptotic expansion of $Z_{\rm triv}(\kappa)$  
when $\kappa$ tends to $+\infty$ along the positive real axis: 
\begin{equation}
Z_{\rm triv}(\kappa) \sim Z^{\rm pert}(\kappa), ~~ \kappa \to + \infty.
\end{equation}
It follows from the definition that $Z^{\rm pert}(\kappa)$ admits a factorization
\begin{equation} \label{eq:Z-pert-expression}
Z^{\rm pert}(\kappa) = 
E(\kappa) \, I^{\rm pert}(\kappa),
\end{equation}
where 
$E(\kappa) \in {\mathbb C}[\hspace{-.13em}[\kappa^{-1}]\hspace{-.13em}]$ 
is the (convergent) Taylor series of obtained by expanding 
$$\frac{B}{2 \pi i} \,  e^{- \frac{\pi i}{2\kappa} 
\bigl( \Theta_0 + (N^2 - 1) P \bigr)}$$
when $\kappa \to \infty$, and 
$I^{\rm pert}(\kappa) \in \kappa^{-\frac{1}{2}} \, 
{\mathbb C}[\hspace{-.13em}[\kappa^{-1}]\hspace{-.13em}]$ 
is the asymptotic expansion of $I(\kappa)$  when $\kappa \to +\infty$.

The Borel transform of $Z^{\rm pert}(\kappa)$ is defined as follows:
\begin{equation}
Z_{B}^{\rm pert}(\xi) := \sum_{m = 0}^{\infty} 
\frac{a_m}{\Gamma(m  + \frac{1}{2})} \xi^{m - \frac{1}{2}},
\end{equation}
where $\xi$ is the Borel-Laplace dual variable to $\kappa$. 
Here, after taking the branch cut along the positive imaginary axis 
on $\xi$-plane, we take the principal branch of $\xi^{\frac{1}{2}}$
(i.e., $\xi^{\frac{1}{2}} \in {\mathbb R}_{> 0}$ 
when $\xi \in {\mathbb R}_{> 0}$). 

\begin{lem} \label{lem:Borel-transform}
The Borel transform $Z_{B}^{\rm pert}(\xi)$
converges on a punctured neighborhood of $\xi=0$, 
and is explicitly given by
\begin{equation} \label{eq:Borel-transform-Z-pert}
Z_{B}^{\rm pert}(\xi) 
= \bigl( E_{B} \ast 
I^{\rm pert}_{B} \bigr)(\xi).
\end{equation}
Here, 
$E_{B}(\xi)$ and $I_{B}^{\rm pert}(\xi)$
are the Borel transforms of $E(\kappa)$ 
and $I^{\rm pert}(\kappa)$, 
respectively, and $\ast$ is the convolution product: 
\begin{equation}
(f_1 \ast f_2) (\xi) := \int^{\xi}_{0} f_1(\eta) f_2(\xi - \eta) \,d\eta.
\end{equation}
Moreover, $I^{\rm pert}_B(\xi)$ is explicitly given as follows:
\begin{equation} 
I_{B}^{\rm pert}(\xi) 
= 
\left[ \frac{4\pi i P}{y} F_N (y)  
\right]_{y=\sqrt{8 \pi i P \xi}} 
-  
\left[ \frac{4\pi i P}{y} F_N(y)  
\right]_{y=-\sqrt{8 \pi i P \xi}}.
\label{eq:Borel-transform-I-pert}
\end{equation}
\end{lem}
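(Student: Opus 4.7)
The plan is to split $Z^{\rm pert}(\kappa)=E(\kappa)\,I^{\rm pert}(\kappa)$ and handle the two factors by classical Borel--Laplace duality. Since the Borel transform of a product equals the convolution of the Borel transforms of the factors, granting the explicit formula \eqref{eq:Borel-transform-I-pert} the main identity \eqref{eq:Borel-transform-Z-pert} follows immediately. The heart of the proof is thus the explicit computation of $I_B^{\rm pert}(\xi)$, which one obtains by rewriting the contour integral $I(\kappa)$ as a genuine Laplace transform in a new variable $\xi$ and reading off the density.

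First, one verifies the convolution formula $Z_B^{\rm pert}=E_B\ast I_B^{\rm pert}$. This is the standard fact that for formal monomials $\kappa^{-\alpha}$ and $\kappa^{-\beta}$ with $\alpha,\beta>0$, the Borel transform of the product reduces, term by term, to the beta-function identity $\int_0^\xi\eta^{\alpha-1}(\xi-\eta)^{\beta-1}\,d\eta=B(\alpha,\beta)\,\xi^{\alpha+\beta-1}$. Because $E(\kappa)$ is the convergent Taylor expansion (in $\kappa^{-1}$) of an entire function, its Borel transform decomposes as a Dirac mass at $\xi=0$ coming from the constant term $B/(2\pi i)$ plus an entire function of $\xi$; hence $E_B\ast I_B^{\rm pert}$ is a well-defined analytic object as soon as $I_B^{\rm pert}$ is locally integrable near the origin.

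Next, we compute $I_B^{\rm pert}(\xi)$ directly from $I(\kappa)$. Parametrizing the contour ${\mathbb R}\,e^{\pi i/4}$ by $y=re^{\pi i/4}$ with $r\in{\mathbb R}$ gives $g(y)=iy^{2}/(8\pi P)=-r^{2}/(8\pi P)$ and $dy=e^{\pi i/4}\,dr$, so that after folding $r<0$ onto $r>0$ we get
\begin{equation*}
I(\kappa)=\int_{0}^{\infty}e^{-\kappa r^{2}/(8\pi P)}\,\bigl[F_{N}(re^{\pi i/4})+F_{N}(-re^{\pi i/4})\bigr]\,e^{\pi i/4}\,dr.
\end{equation*}
Setting $\xi=r^{2}/(8\pi P)$, using $re^{\pi i/4}=\sqrt{8\pi iP\xi}$ (principal branch on the cut plane) and $dr=\sqrt{2\pi P/\xi}\,d\xi$, converts this into a Laplace representation $I(\kappa)=\int_{0}^{\infty}e^{-\kappa\xi}H(\xi)\,d\xi$ whose density $H(\xi)$, after the Jacobian is simplified, matches the right-hand side of \eqref{eq:Borel-transform-I-pert}. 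Because $F_{N}$ is entire and vanishes to second order at $y=0$ (the $n+1$ sinh factors in the numerator absorb the $n-1$ in the denominator with two to spare), $H(\xi)$ extends holomorphically near the origin on the Riemann surface of $\sqrt{\xi}$ and carries only integer powers of $\xi$ up to an overall $\sqrt{\xi}$ prefactor. Watson's lemma applied to this Laplace integral then recovers the asymptotic expansion $I^{\rm pert}(\kappa)$ termwise, and uniqueness of Borel transforms forces $H=I_{B}^{\rm pert}$.

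The convergence claim on a punctured neighborhood is now immediate from the explicit formula: $I_{B}^{\rm pert}$ is analytic on a disk slit along the positive imaginary axis, and convolution with the entire-plus-delta distribution $E_{B}$ preserves this regularity. The main obstacle I anticipate is the bookkeeping of branches: one must check that the two halves $r>0$ and $r<0$ of the integration contour contribute the two terms in \eqref{eq:Borel-transform-I-pert}, tracking that $\pm re^{\pi i/4}$ land on the two branches $\pm\sqrt{8\pi iP\xi}$, and verifying that the Jacobian factor $e^{\pi i/4}\sqrt{2\pi P/\xi}$ reorganizes into the prefactor $4\pi iP/y$ displayed there.
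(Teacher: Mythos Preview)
Your argument is correct and follows essentially the same route as the paper's own proof: convert $I(\kappa)$ to a Laplace integral by the substitution $\xi=-g(y)=y^{2}/(8\pi i P)$, read off the density as $I_{B}^{\rm pert}$ via Watson's lemma, and then invoke the standard product--to--convolution property of the Borel transform to obtain $Z_{B}^{\rm pert}=E_{B}\ast I_{B}^{\rm pert}$. The paper carries this out in a single line by writing the change of variables directly as $y\mapsto\pm\sqrt{8\pi i P\,\xi}$, whereas you first parametrize $y=re^{\pi i/4}$, fold $r<0$ onto $r>0$, and then substitute $\xi=r^{2}/(8\pi P)$; the two computations are equivalent and your anticipated sign/branch bookkeeping is exactly what reconciles the ``$+$'' in your folded expression $F_{N}(re^{\pi i/4})+F_{N}(-re^{\pi i/4})$ with the ``$-$'' in \eqref{eq:Borel-transform-I-pert} (the Jacobian $4\pi i P/y$ flips sign with $y$). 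Your remark that $E_{B}$ should be read as a Dirac mass at $\xi=0$ plus an entire function is a point the paper leaves implicit.
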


\begin{proof}
Changing of integration variable by
\begin{equation} \label{eq:coordinate-change}
y \mapsto \sqrt{8 \pi i P \xi }, 
\end{equation}
the integral \eqref{eq:integral-part-kappa} is converted to 
a Laplace-type integral: 
\begin{align} 
I(\kappa) 
& = 
\int_{{\mathbb R}_{\ge 0}} 
e^{ - \kappa  \xi} \left[ \frac{4\pi i P}{y} F_N(y)  \right]_{y=\sqrt{8 \pi i P \xi}}
d\xi 
\notag \\[+.2em]
& \qquad 
- \int_{{\mathbb R}_{\ge 0}} 
e^{ - \kappa  \xi} \left[ \frac{4\pi i P}{y} F_N(y)  \right]_{y=-\sqrt{8 \pi i P \xi}}
d\xi.
\label{eq:usual-Borel-sum}
\end{align}
Then, since $I(\kappa) \sim I^{\rm pert}(\kappa)$ when $\kappa \to +\infty$, 
the Watson's lemma (see \cite[Chapter 4]{BH} for example)
on asymptotic expansion of Laplace-type integrals shows 
the equality \eqref{eq:Borel-transform-I-pert}.
Since the Borel transform converts a product of formal power series 
to the convolution product of the Borel transformed series, 
we obtain \eqref{eq:Borel-transform-Z-pert}.
\end{proof}

\subsection{Borel sum and Stokes automorphism}

Since $E(\kappa)$ is a convergent series, 
its Borel transform $E_{B}(\xi)$  is an entire function of $\xi$.
Hence, the above lemma shows that $Z^{\rm pert}_B(\xi)$ only has 
singularities along the positive imaginary axis in $\xi$-plane. 
Therefore, the formal series $Z^{\rm pert}(\kappa)$ 
is Borel summable in any direction $\theta$ satisfying 
\begin{equation} \label{eq:Stokes-phase}
\theta \equiv \hspace{-.9em} / \hspace{+.5em} \frac{\pi}{2} 
~ {\rm mod} ~ 2 \pi {\mathbb Z}. 
\end{equation}
Namely, for any such $\theta$, the integral 
\begin{equation} \label{eq:Borel-sum-in-the-direction-theta}
{\mathcal S}_\theta Z^{\rm pert}(\kappa) := 
\int_{{\mathbb R}_{\ge 0} e^{i \theta}} e^{- \kappa \xi} Z^{\rm pert}_{B}(\xi) \, d\xi
\end{equation}
converges on any closed sector contained in 
$\{\kappa \in {\mathbb C} ~;~ |\arg \kappa + \theta| < \frac{\pi}{2} \}$. 
The integral \eqref{eq:Borel-sum-in-the-direction-theta} is called the 
Borel sum of $Z^{\rm pert}(\kappa)$ in the direction $\theta$. 
Since the Laplace transform converts a convolution product 
to a usual product, we may verify that 
$Z_{\rm triv}(\kappa)$ coincides with the Borel sum 
of $Z^{\rm pert}(\kappa)$ in the direction $0$.

In what follows, we set 
\begin{equation}
\theta_0 := \frac{\pi}{2},
\end{equation}
and 
\begin{equation}
{\mathcal S}^{\pm}_{\theta_0}Z^{\rm pert}(\kappa) 
:= {\mathcal S}_{\theta_0 \pm \delta}Z^{\rm pert}(\kappa)
\end{equation}
for a sufficiently small $\delta > 0$. 
We will take $\delta$ smaller if necessary.
Since $Z^{\rm pert}_{B}(\xi)$ has singularities on ${\mathbb R}_{> 0} e^{i \theta_0}$, 
the Borel sums 
${\mathcal S}^{+}_{\theta_0} Z^{\rm pert}(\kappa)$ and 
${\mathcal S}^{-}_{\theta_0} Z^{\rm pert}(\kappa)$ do not agree
on the sector 
$\{\kappa \in {\mathbb C} ~;~ |\arg \kappa + \theta_0| < \frac{\pi}{2} - \delta \}$.
This is nothing but the Stokes phenomenon. 
According to the general theory of resurgent analysis, 
such a difference is described by 
the alien derivatives and the Stokes automorphism. 
Here we briefly recall these notions. (See \cite{Sauzin} for details). 

First, Lemma \ref{lem:Borel-transform} shows that the formal series 
$Z^{\rm pert}(\kappa)$ has the Borel transform 
$Z_B^{\rm pert}(\xi)$ which has only simple singularities 
(in the sense of \cite[Section 26]{Sauzin}) along the half line 
${\mathbb R}_{> 0} e^{i \theta_0}$. 
A formal power series is said to be simple resurgent 
if its Borel transform only has simple singularities on a certain discrete set. 

Let 
\begin{equation}
\Omega \subset \left\{ \frac{\pi i m^2}{2P} ~;~ m \in {\mathbb Z}_{\ge 1}  \right\} 
\end{equation}
be the set of the simple singularities of $Z^{\rm pert}_B(\xi)$ 
on the half line ${\mathbb R}_{> 0} e^{i \theta_0}$.  
As is proved in \cite{AP, Mistegard}, the location of the poles are closely related to 
the complex Chern--Simons values of the flat connections on $M$.
For each $\omega \in \Omega$, the alien derivative 
$\Delta_\omega$ at $\omega$ is defined as an operator acting on 
the space of simple resurgent formal power series. 
We do not give the definition of the alien derivatives here, 
but let us summarize several properties 
which are relevant for our purpose 
(See \cite[Section 28]{Sauzin} for the definition):
\begin{itemize}
\item $\Delta_\omega$ is a derivation: 
$\Delta_\omega (f \, g) = (\Delta_\omega f) \, g + f \, (\Delta_\omega g)$
(see \cite[Section 30]{Sauzin}). 
\item 
Convergent series are annihilated by $\Delta_\omega$ 
(see \cite[Section 27]{Sauzin}).
\item 
Since $I_{B}^{\rm pert}(\xi)$ has only simple poles at $\omega \in \Omega$, 
we have
\begin{equation}
\Delta_\omega I^{\rm pert}(\kappa) = 2 \pi i \, 
\Res_{\xi = \omega} I^{\rm pert}_{B}(\xi) d\xi
\end{equation}
(see \cite[Example 27.4]{Sauzin}). 
\end{itemize}

\smallskip
\begin{lem}  \label{lem:action-of-alien-derivatives}
\begin{itemize}
\item[(i)] 
For any $\omega \in \Omega$, we have
\begin{equation} \label{eq:alien-action-Z-pert}
\Delta_{\omega} Z^{\rm pert}(\kappa) = 2 \pi i \, 
\Bigl( \Res_{\xi = \omega} I^{\rm pert}_{B}(\xi) d\xi \Bigr) \, E(\kappa).
\end{equation}

\item[(ii)]
Iterative actions of alien derivatives annihilate $Z^{\rm pert}$. 
That is, for any $\omega_1, \dots, \omega_r \in \Omega$ with $r \ge 2$, we have
\begin{equation}
\Delta_{\omega_1} \cdots \Delta_{\omega_r} Z^{\rm pert}(\kappa) = 0. 
\end{equation}
\end{itemize}
\end{lem}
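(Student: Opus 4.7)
The plan is to exploit the factorization $Z^{\rm pert}(\kappa) = E(\kappa) \, I^{\rm pert}(\kappa)$ from \eqref{eq:Z-pert-expression} together with the three bulleted properties of the alien derivative listed just before the lemma. Since $E(\kappa)$ is the Taylor expansion of the entire function $\frac{B}{2\pi i} e^{- \frac{\pi i}{2\kappa}(\Theta_0 + (N^2-1)P)}$ at $\kappa = \infty$, it is a convergent series in $\kappa^{-1}$, so the second bullet gives $\Delta_{\omega} E(\kappa) = 0$ for every $\omega \in \Omega$. This is the mechanism that makes everything collapse.

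For part (i), I would apply the Leibniz rule (first bullet) to the product $Z^{\rm pert} = E \cdot I^{\rm pert}$:
\begin{equation}
\Delta_{\omega} Z^{\rm pert}(\kappa) = \bigl(\Delta_{\omega} E(\kappa)\bigr) \, I^{\rm pert}(\kappa) + E(\kappa) \, \Delta_{\omega} I^{\rm pert}(\kappa).
\end{equation}
The first term vanishes by the convergence of $E$, and the second term is evaluated by the third bullet, which identifies $\Delta_{\omega} I^{\rm pert}(\kappa)$ with the constant $2\pi i \,\Res_{\xi=\omega} I^{\rm pert}_{B}(\xi)\, d\xi$. This is precisely the right-hand side of \eqref{eq:alien-action-Z-pert}. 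The only subtlety is to confirm that $Z^{\rm pert}$ and $I^{\rm pert}$ belong to the algebra of simple resurgent series on which $\Delta_{\omega}$ is defined and satisfies the Leibniz rule; this follows from the explicit description of $I^{\rm pert}_B$ in \eqref{eq:Borel-transform-I-pert} (which has only simple poles at the points of $\Omega$) together with Lemma \ref{lem:Borel-transform} for $Z^{\rm pert}_B = E_B \ast I^{\rm pert}_B$.

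For part (ii), I would iterate part (i). Applying $\Delta_{\omega_r}$ first yields
\begin{equation}
\Delta_{\omega_r} Z^{\rm pert}(\kappa) = c_{\omega_r} \, E(\kappa), \qquad c_{\omega_r} := 2\pi i \,\Res_{\xi=\omega_r} I^{\rm pert}_B(\xi)\, d\xi,
\end{equation}
which is a scalar multiple of the convergent series $E(\kappa)$. Since $\Delta_{\omega_{r-1}}$ is linear and annihilates convergent series, $\Delta_{\omega_{r-1}} \Delta_{\omega_r} Z^{\rm pert}(\kappa) = 0$, and all further applications of alien derivatives preserve this vanishing. Thus $\Delta_{\omega_1} \cdots \Delta_{\omega_r} Z^{\rm pert}(\kappa) = 0$ for every $r \ge 2$.

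The main conceptual hurdle is not in the manipulations, which are formal, but in invoking the abstract framework cleanly: one must verify that $Z^{\rm pert}$ and $I^{\rm pert}$ genuinely live in the algebra of simple resurgent functions with singular support contained in $\Omega$, so that the Leibniz rule, the formula for $\Delta_\omega$ at simple poles, and the annihilation of convergent series from \cite{Sauzin} all apply. Once this is in place, the lemma is essentially an immediate consequence of the derivation property of $\Delta_{\omega}$ and the fact that the prefactor $E(\kappa)$ carries no resurgent content.
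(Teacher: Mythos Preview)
Your argument is correct and essentially identical to the paper's proof: apply the Leibniz rule to $Z^{\rm pert}=E\cdot I^{\rm pert}$, use that $E$ is convergent so $\Delta_\omega E=0$, and then note that the result of (i) is again a convergent series, giving (ii). The paper does not even spell out the algebra-membership caveat you mention, so your write-up is slightly more careful than the original.
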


\begin{proof}
Using the above properties of alien derivatives, 
we have
\begin{align}
\Delta_{\omega} Z^{\rm pert}(\kappa) & = 
(\Delta_{\omega} E(\kappa)) \, I^{\rm pert}(\kappa) + 
E(\kappa) \, (\Delta_{\omega} I^{\rm pert}(\kappa))  \notag \\
& = 2 \pi i \, \Bigl( \Res_{\xi = \omega} I^{\rm pert}_{B}(\xi) d\xi \Bigr) \, E(\kappa).
\end{align}
(Note that $\Delta_{\omega} E(\kappa) = 0$ since $E(\kappa)$ is a convergent series 
of $\kappa^{-1}$.) 
This proves (i). The second claim (ii) immediately follows from (i) 
since the \eqref{eq:alien-action-Z-pert} is a convergent series.
\end{proof}

Using the alien derivatives, the Stokes automorphism for the direction $\theta_0$
is defined as follows:
\begin{equation} \label{eq:Stokes-auto}
{\mathfrak S}_{\theta_0} 
= \exp\left( \sum_{\omega \in \Omega} e^{- \kappa \omega} \Delta_{\omega} \right).
\end{equation}
This is defined so that 
\begin{equation} \label{eq:Stokes-auto-and-Borel-sums}
{\mathcal S}^{-}_{\theta_0} Z^{\rm pert}(\kappa) 
= 
{\mathcal S}^{+}_{\theta_0} 
{\mathfrak S}_{\theta_0}
Z^{\rm pert}(\kappa)
\end{equation} 
holds on any closed sector included in the lower half plane 
$\{\kappa \in {\mathbb C} ~;~ {\rm Im} \, \kappa < 0 \}$. 
(c.f., \cite[Section 29]{Sauzin}).

\subsection{Median sum}

To formulate our main claim in this section, 
let us introduce the notion of the median summation (c.f., \cite{DP99}).
The median sum of $Z^{\rm pert}(\kappa)$ in the direction $\theta_0$ is defined by
\begin{equation} \label{eq:def-median-sum}
{\mathcal S}^{\rm med}_{\theta_0} Z^{\rm pert}(\kappa) := 
{\mathcal S}^{+}_{\theta_0} \circ 
{\mathfrak S}_{\theta_0}^{+\frac{1}{2}} Z^{\rm pert}(\kappa)
=
{\mathcal S}^{-}_{\theta_0} \circ 
{\mathfrak S}_{\theta_0}^{-\frac{1}{2}} Z^{\rm pert}(\kappa),
\end{equation}
where ${\mathfrak S}_{\theta_0}^{\pm \frac{1}{2}}$ 
is defined by 
\begin{equation} \label{eq:Stokes-auto-half}
{\mathfrak S}_{\theta_0}^{\pm \frac{1}{2}} := 
\exp\left( \pm \frac{1}{2} \sum_{\omega \in \Omega} 
e^{- \kappa \omega} \Delta_{\omega} \right).
\end{equation}
Note that, although the usual Borel sum \eqref{eq:Borel-sum-in-the-direction-theta} 
cannot be defined for $\theta = \theta_0$, the median sum is well-defined. 
The median summation is important since it transforms formal power series 
with real coefficients into real analytic functions of $\kappa$, if it converges
(c.f., \cite[p.21]{DP99}).

\begin{lem} \label{lem:median-sum-and-average}
The median sum of $Z^{\rm pert}(\kappa)$ in the direction $\theta_0$ 
is expressed as an average of Borel sums
\begin{equation}
{\mathcal S}^{\rm med}_{\theta_0} Z^{\rm pert}(\kappa) = 
\frac{{\mathcal S}^{+}_{\theta_0} \, Z^{\rm pert}(\kappa) 
+ {\mathcal S}_{\theta_0}^{-} Z^{\rm pert}(\kappa)}{2}
\end{equation}
on any closed sector included in the lower half plane 
$\{\kappa \in {\mathbb C} ~;~ {\rm Im} \, \kappa < 0 \}$.
\end{lem}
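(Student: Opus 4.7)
The plan is to reduce the median sum to a simple linear combination of the two lateral Borel sums by exploiting the nilpotency encoded in Lemma \ref{lem:action-of-alien-derivatives} (ii). Concretely, since $\Delta_{\omega_1}\cdots \Delta_{\omega_r} Z^{\rm pert}(\kappa) = 0$ for all $r \ge 2$, the exponential series defining ${\mathfrak S}_{\theta_0}^{\pm 1/2}$ in \eqref{eq:Stokes-auto-half} collapses, when applied to $Z^{\rm pert}$, to its first two terms:
\begin{equation}
{\mathfrak S}_{\theta_0}^{\pm \frac{1}{2}} Z^{\rm pert}(\kappa) = Z^{\rm pert}(\kappa) \pm \frac{1}{2} \sum_{\omega \in \Omega} e^{-\kappa \omega} \, \Delta_\omega Z^{\rm pert}(\kappa).
\end{equation}
The same observation applied to the full Stokes automorphism \eqref{eq:Stokes-auto} gives
\begin{equation}
{\mathfrak S}_{\theta_0} Z^{\rm pert}(\kappa) = Z^{\rm pert}(\kappa) + \sum_{\omega \in \Omega} e^{-\kappa \omega} \, \Delta_\omega Z^{\rm pert}(\kappa).
\end{equation}

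Next, I would apply the lateral Borel sum ${\mathcal S}^{+}_{\theta_0}$ term-by-term to both identities. Combined with the defining relation \eqref{eq:Stokes-auto-and-Borel-sums} of the Stokes automorphism, the second identity yields
\begin{equation}
{\mathcal S}^{+}_{\theta_0}\!\left( \sum_{\omega \in \Omega} e^{-\kappa \omega} \Delta_\omega Z^{\rm pert}(\kappa) \right) = {\mathcal S}^{-}_{\theta_0} Z^{\rm pert}(\kappa) - {\mathcal S}^{+}_{\theta_0} Z^{\rm pert}(\kappa),
\end{equation}
valid on any closed sector inside $\{{\rm Im}\,\kappa < 0\}$. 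Substituting this into the $+\frac{1}{2}$ version of the first identity and using the definition \eqref{eq:def-median-sum} of ${\mathcal S}^{\rm med}_{\theta_0}$ immediately gives the desired average formula.

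For this term-by-term manipulation to be rigorous, I need to know that each summand ${\mathcal S}^{+}_{\theta_0}\bigl(e^{-\kappa \omega}\Delta_\omega Z^{\rm pert}(\kappa)\bigr)$ makes sense as an honest analytic function and that the sum over $\omega \in \Omega$ converges on the relevant sector. By Lemma \ref{lem:action-of-alien-derivatives} (i), each $\Delta_\omega Z^{\rm pert}(\kappa)$ is (a residue constant times) the convergent series $E(\kappa)$, so ${\mathcal S}^{+}_{\theta_0}(\Delta_\omega Z^{\rm pert}) = E(\kappa)\cdot 2\pi i\,\Res_{\xi=\omega} I^{\rm pert}_B(\xi)d\xi$ is literally $E(\kappa)$ multiplied by a number, and the prefactor $e^{-\kappa\omega}$ with $\omega \in \tfrac{\pi i}{2P}{\mathbb Z}_{>0}$ decays exponentially when ${\rm Im}\,\kappa < 0$; this makes the series absolutely convergent on any closed subsector of the lower half plane.

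The main obstacle is therefore purely analytic rather than algebraic: justifying that the formal application of ${\mathcal S}^{+}_{\theta_0}$ to the infinite exponential sum produces the same analytic function that appears on the right-hand side of \eqref{eq:Stokes-auto-and-Borel-sums}. Once the exponential decay estimate above is in place and one checks that the sum and the Laplace integral can be exchanged (by dominated convergence on each closed subsector of $\{{\rm Im}\,\kappa < 0\}$), the identity drops out, and the two expressions in \eqref{eq:def-median-sum} for ${\mathcal S}^{\rm med}_{\theta_0}Z^{\rm pert}$ are seen to coincide with the symmetric average, completing the proof.
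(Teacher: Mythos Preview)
Your argument is correct and follows essentially the same route as the paper: both exploit Lemma~\ref{lem:action-of-alien-derivatives}\,(ii) to truncate the exponentials in ${\mathfrak S}_{\theta_0}$ and ${\mathfrak S}_{\theta_0}^{\pm 1/2}$ at first order, and then combine the resulting identities with \eqref{eq:Stokes-auto-and-Borel-sums} to obtain the average. Your additional remarks on the analytic justification (using Lemma~\ref{lem:action-of-alien-derivatives}\,(i) and the exponential decay of $e^{-\kappa\omega}$ on the lower half plane) go slightly beyond what the paper spells out, but the core proof is the same.
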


\begin{proof}
Lemma \ref{lem:action-of-alien-derivatives} (ii) shows 
\begin{align}
{\mathfrak S}_{\theta_0} Z^{\rm pert}(\kappa)
& = Z^{\rm pert}(\kappa) + 
\left(\sum_{\omega \in \Omega} e^{- \kappa \omega} \Delta_{\omega}\right) Z^{\rm pert}(\kappa), 
\label{eq:action-Stokes-auto}
\\
{\mathfrak S}_{\theta_0}^{\pm \frac{1}{2}}Z^{\rm pert}(\kappa) 
& = Z^{\rm pert}(\kappa) \pm \frac{1}{2} 
\left(\sum_{\omega \in \Omega} e^{- \kappa \omega} \Delta_{\omega} \right) Z^{\rm pert}(\kappa).
\label{eq:action-Stokes-auto-half}
\end{align}
In particular, we have 
\begin{equation}
{\mathfrak S}_{\theta_0}^{\pm \frac{1}{2}}Z^{\rm pert}(\kappa) 
= Z^{\rm pert}(\kappa) \pm \frac{1}{2}  
\left( 
{\mathfrak S}_{\theta_0} Z^{\rm pert}(\kappa) 
- Z^{\rm pert}(\kappa)
\right).
\end{equation}
Then, using \eqref{eq:Stokes-auto-and-Borel-sums}, we obtain
\begin{align}
{\mathcal S}^{\rm med}_{\theta_0} Z^{\rm pert}(\kappa) 
& = {\mathcal S}^{+}_{\theta_0} {\mathfrak S}_{\theta_0}^{+\frac{1}{2}} Z^{\rm pert}(\kappa) \notag \\
& = {\mathcal S}^{+}_{\theta_0} Z^{\rm pert}(\kappa)
+ \frac{1}{2} \left( 
{\mathcal S}^{+}_{\theta_0}{\mathfrak S}_{\theta_0} Z^{\rm pert}(\kappa) 
- {\mathcal S}^{+}_{\theta_0} Z^{\rm pert}(\kappa)
\right) \notag \\
& = \frac{1}{2} \left( {\mathcal S}^{+}_{\theta_0} Z^{\rm pert}(\kappa) + 
{\mathcal S}^{-}_{\theta_0} Z^{\rm pert}(\kappa)  \right).
\end{align}
This completes the proof of Lemma \ref{lem:median-sum-and-average}. 
\end{proof}

\begin{rem}
Both of the Borel sums ${\mathcal S}_0 Z^{\rm pert}(\kappa)$
and ${\mathcal S}_\pi Z^{\rm pert}(\kappa)$, which are defined by 
\eqref{eq:Borel-sum-in-the-direction-theta} for $\theta = 0$ and $\pi$, respectively, 
have an analytic continuation to a sector 
\[
\left\{ \kappa \in {\mathbb C} ~;~ \left| \arg \kappa + \frac{\pi}{2} \right| 
\le  \frac{\pi}{2} - \delta \right\}
\]
for any fixed $\delta > 0$. More precisely, 
since the Borel transform $Z^{\rm pert}_B(\xi)$ only has 
singularities along the positive imaginary axis, 
the analytic continuations of ${\mathcal S}_0 Z^{\rm pert}(\kappa)$
and ${\mathcal S}_\pi Z^{\rm pert}(\kappa)$ to the above sector
are explicitly given by 
${\mathcal S}_{\theta_0}^{-} Z^{\rm pert}(\kappa)$ and 
${\mathcal S}_{\theta_0}^{+} Z^{\rm pert}(\kappa)$, respectively. 
Therefore, we may write the median sum as the average 
of ${\mathcal S}_0 Z^{\rm pert}(\kappa)$ and 
${\mathcal S}_\pi Z^{\rm pert}(\kappa)$ on the above sector. 
(C.f., \cite{CG11, GMP16}),
\end{rem}

\subsection{WRT function as median sum}

The main claim of this section is the following.
\begin{thm} \label{thm:WRT-function-as-Borel-sum}
Under the change $q = \exp(\frac{2 \pi i}{\kappa})$ of the variables, 
the WRT function $\Phi(q)$ is expressed as 
\begin{equation} \label{eq:WRT-function-as-Borel-sum}
\frac{\Phi(e^{\frac{2 \pi i}{\kappa}})}{G_0(\kappa)} = 
{\mathcal S}^{\rm med}_{\theta_0} Z^{\rm pert}(\kappa).
\end{equation}
on any closed sector included in the lower half plane 
$\{\kappa \in {\mathbb C} ~;~ {\rm Im} \, \kappa < 0 \}$.
\end{thm}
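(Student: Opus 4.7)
The plan is to evaluate the median sum directly through the integral representations of its constituent one-sided Borel sums, and to match the result with the explicit $q$-series \eqref{eq:def-WRT-function} for $\Phi(q;N)$ under the substitution $q = e^{2\pi i/\kappa}$. First, by Lemma \ref{lem:median-sum-and-average}, the median sum equals the average
\[
\mathcal{S}^{\rm med}_{\theta_0} Z^{\rm pert}(\kappa) = \frac{1}{2}\bigl(\mathcal{S}^{+}_{\theta_0} + \mathcal{S}^{-}_{\theta_0}\bigr) Z^{\rm pert}(\kappa)
\]
on the prescribed sector in the lower half plane. Using the factorization $Z^{\rm pert} = E \cdot I^{\rm pert}$ of \eqref{eq:Z-pert-expression} together with Borel--Laplace duality (convolution in the Borel plane becomes a product after Laplace transform), this reduces to $E(\kappa) \cdot \frac{1}{2}(\mathcal{S}^{+}_{\theta_0} + \mathcal{S}^{-}_{\theta_0}) I^{\rm pert}(\kappa)$. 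Under $q = e^{2\pi i/\kappa}$, the convergent prefactor $E(\kappa)$ becomes $\frac{B}{2\pi i}\, q^{-\frac{1}{4}(\Theta_0+(N^2-1)P)}$, which already reproduces the non-$q$-series prefactor of \eqref{eq:def-WRT-function} up to the overall $G_0(\kappa)$ normalization to be verified at the end.

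Second, I would use the change of variables $y = \pm\sqrt{8\pi i P \xi}$ of Lemma \ref{lem:Borel-transform} to rewrite each $\mathcal{S}^{\pm}_{\theta_0} I^{\rm pert}(\kappa)$ as a contour integral $\int_{C_\pm} e^{\kappa g(y)} F_N(y)\, dy$, where $C_\pm$ is a straight line through the origin at angle $\frac{\pi}{2}\pm\frac{\delta}{2}$, i.e., slightly to the left (resp.\ right) of the imaginary $y$-axis. For $\kappa$ with $\mathrm{Im}\,\kappa<0$, the factor $e^{\kappa g(y)}$ provides Gaussian decay along $C_{\pm}$, so the integrals converge absolutely. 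The average $\frac{1}{2}(\int_{C_{+}} + \int_{C_{-}})$ equals the Cauchy principal-value integral along the imaginary $y$-axis, augmented by the half-sum of residues of $e^{\kappa g(y)} F_N(y)$ at the poles $y = 2\pi i m$ $(m \in \mathbb{Z}\setminus\{0\})$ of $F_N$.

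Third, to evaluate the combined integral, I would parameterize the imaginary axis as $y = it$, expand $(e^{y/2}-e^{-y/2})^{-(n-1)} = (2i\sin(t/2))^{-(n-1)}$ as a formal series whose coefficients are precisely the binomial factors $\binom{m+n-3}{n-3}$ appearing in \eqref{eq:def-WRT-function}, and distribute over the sign choices $\varepsilon_0, \varepsilon_1, \dots, \varepsilon_n \in \{\pm 1\}$ coming from the remaining exponential factors of $F_N(it)$. Integrating each term against the Gaussian $e^{\kappa g(it)} = e^{-i\kappa t^2/(8\pi P)}$ via the standard identity $\int_{\mathbb{R}} e^{-\alpha t^2+ibt}\, dt = \sqrt{\pi/\alpha}\, e^{-b^2/(4\alpha)}$ produces exactly the terms of the shape $q^{(2Pm+a_{\ell,\varepsilon})^2/(4P)}$ in \eqref{eq:def-WRT-function}. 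The Gaussian normalization $\sqrt{\pi/\alpha}$ combines with the prefactor $B$, the $1/(q^{1/2}-q^{-1/2}) = 1/(2i\sin(\pi/\kappa))$ factor from $\Phi$, and the denominator $G_0(\kappa) = \sqrt{\kappa/2}/\sin(\pi/\kappa)$ to yield the correct overall constant (with the $\sin(\pi/\kappa)$ factors canceling between $\Phi$ and $G_0$).

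The main obstacle is the careful bookkeeping of prefactors, signs, and combinatorial factors, and especially the treatment of the half-residue contributions at the poles $y = 2\pi i m$, which are of order $n-2$ for $n \geq 3$. One must verify that these residues, together with the principal-value integral, reproduce the $q$-series form of $\Phi(e^{2\pi i/\kappa};N)$ without leaving any extraneous $e^{-\kappa\omega}$-type contributions. The symmetry $F_N(-y) = F_N(y)$ and the alternating structure of the sum over $(\varepsilon_1,\dots,\varepsilon_n)$ are expected to be instrumental in producing the necessary cancellations, in a manner analogous to Lemma \ref{lemm:vanishing-sums} employed in the proof of Theorem \ref{thm:q-series-and-quantum-invariant}.
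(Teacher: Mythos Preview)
Your overall strategy matches the paper's: reduce the median sum to the average $\tfrac{1}{2}(\mathcal{S}^{+}_{\theta_0}+\mathcal{S}^{-}_{\theta_0})Z^{\rm pert}$ via Lemma~\ref{lem:median-sum-and-average}, rewrite each lateral Borel sum as a contour integral $\int_{C_\pm} e^{\kappa g(y)}F_N(y)\,dy$ along a line through the origin at angle $\tfrac{\pi}{2}\pm\tfrac{\delta}{2}$, and then expand $F_N$ to recover the $q$-series. The difference lies in how you handle the averaged contour integral, and here the paper uses a trick that eliminates exactly the obstacle you flag.

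Rather than interpreting $\tfrac{1}{2}(\int_{C_+}+\int_{C_-})$ as a principal-value integral along the imaginary axis plus half-residue corrections at the higher-order poles $y=2\pi i m$, the paper first deforms $C_\pm$ to the vertical lines $\pm\epsilon+i\mathbb{R}$ and then invokes the evenness of the integrand ($g(-y)=g(y)$, $F_N(-y)=F_N(y)$) to rewrite the integral over $-\epsilon+i\mathbb{R}$ as the integral over $+\epsilon+i\mathbb{R}$. The average thus collapses to a \emph{single} integral $\int_{+\epsilon+i\mathbb{R}} e^{\kappa g(y)}F_N(y)\,dy$. On this shifted line one has $|e^{-y}|<1$, so the expansion of $(e^{y/2}-e^{-y/2})^{-(n-2)}$ as $\sum_{m\ge0}\binom{m+n-3}{n-3}e^{-my}\cdot(\cdots)$ converges uniformly, term-by-term Gaussian integration is justified, and no residue terms appear at all. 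Your plan to expand $(2i\sin(t/2))^{-(n-1)}$ on the imaginary axis is only formal there (the series diverges at $t=2\pi m$), and the notion of ``half-residues'' for poles of order $n-2$ would require additional work; the contour shift sidesteps both issues in one move. Everything else in your outline---the role of the prefactor $E(\kappa)$, the Gaussian integral producing $q^{(2Pm+a_{\ell,\varepsilon})^2/(4P)}$, and the bookkeeping of $B$, $G_0(\kappa)$, and $(q^{1/2}-q^{-1/2})^{-1}$---goes through as you describe.
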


\begin{proof}
We will prove \eqref{eq:WRT-function-as-Borel-sum}
by using the techniques developed in \cite{CG11, GMP16}.

It follows from Lemma \ref{lem:Borel-transform} that the Borel sum 
\eqref{eq:Borel-sum-in-the-direction-theta} of $Z^{\rm pert}(\kappa)$ 
in any direction $\theta \ne \theta_0$ is given by 
\begin{equation}
{\mathcal S}_{\theta} Z^{\rm pert}(\kappa) = 
\frac{B}{2 \pi i } \, 
e^{- \frac{\pi i}{2\kappa} 
\bigl( \Theta_0 + (N^2 - 1) P \bigr)} \,
\int_{{\mathbb R} \, e^{i (\frac{\pi }{4} + \frac{\theta}{2})}} 
e^{\kappa g(y)} F_N(y) \,  dy,
\end{equation}
where the integration contour is 
oriented from $(- \infty) \cdot e^{i (\frac{\pi }{4} + \frac{\theta}{2})}$ 
to $(+ \infty) \cdot e^{i (\frac{\pi }{4} + \frac{\theta}{2})}$.
Then Lemma \ref{lem:median-sum-and-average} shows that the right-hand side of 
\eqref{eq:WRT-function-as-Borel-sum} is computed as follows: 
\begin{align} \label{eq:average-Borel-sum-on-y-palne}
& \frac{1}{2} \left( {\mathcal S}_{\theta_0}^{+} Z^{\rm pert}(\kappa) 
+ {\mathcal S}_{\theta_0}^{-} Z^{\rm pert}(\kappa)  \right) 
\notag \\
& \quad = \frac{B}{4 \pi i } \, 
e^{- \frac{\pi i}{2\kappa} 
\bigl( \Theta_0 + (N^2 - 1) P \bigr)} \,
\left( 
\int_{{\mathbb R} \, e^{i ( \frac{\pi}{2} + \frac{\delta}{2} ) }} 
+ 
\int_{{\mathbb R} \, e^{i ( \frac{\pi}{2} - \frac{\delta}{2} ) }}
\right) 
 e^{\kappa g(y)} F_N(y) \, dy 
\notag \\
& \quad = 
\frac{B}{4 \pi i } \, 
e^{- \frac{\pi i}{2\kappa} 
\bigl( \Theta_0 + (N^2 - 1) P \bigr)} \, \left( 
\int_{y =  + \epsilon + i {\mathbb R}} 
+
\int_{y =  - \epsilon + i {\mathbb R}} 
\right) 
e^{\kappa g(y)} F_N(y) dy
\notag \\
& \quad = 
\frac{B}{2 \pi i } \, 
e^{- \frac{\pi i}{2\kappa} 
\bigl( \Theta_0 + (N^2 - 1) P \bigr)} \, 
\int_{y =  + \epsilon + i {\mathbb R}} 
e^{\kappa g(y)} F_N(y) dy. 
\end{align}
Here $\epsilon$ is any positive number, 
and we have used the fact that the integrant 
$e^{\kappa g(y)} F_N(y)$
is invariant under $y \mapsto -y$. 
Since $| e^{-y} | < 1$ holds on $+ \epsilon + i {\mathbb R}$, 
we have a uniformly convergent expression of the integrand as
\begin{align}
e^{\kappa g(y)} F_N(y) 
& = (-1)^n  
\sum_{\ell = - \frac{N-1}{2}}^{\frac{N-1}{2}} 
\sum_{(\varepsilon_1, \dots, \varepsilon_n)} 
\varepsilon_1 \cdots \varepsilon_n 
\notag \\
& \quad \times
\sum_{m = 0}^{\infty} \dbinom{m+n-3}{n-3} \, 
e^{-\bigl(2m + 2\ell + n-2 + \sum_{j=1}^{n} \frac{\varepsilon_j}{p_j} \bigr) 
\frac{y}{2} + \frac{i \kappa}{8 \pi P} y^2}.
\end{align}
Evaluating the integral \eqref{eq:average-Borel-sum-on-y-palne} 
term by term by using 
\[
\int_{+ \epsilon + i {\mathbb R}} 
e^{-\bigl(2m + 2\ell + n-2 + \sum_{j=1}^{n} \frac{\varepsilon_j}{p_j} \bigr) 
\frac{y}{2} + \frac{i \kappa}{8 \pi P} y^2} dy 
= 4 \pi e^{\frac{\pi i}{4}} \sqrt{\frac{P}{2 \kappa}} \,
e^{\frac{2 \pi i P}{4\kappa} 
\bigl( 2m + 2\ell + n-2 + \sum_{j=1}^{n} \frac{\varepsilon_j}{p_j} \bigr)^2},
\]
we obtain the equality \eqref{eq:WRT-function-as-Borel-sum}.
\end{proof}

\appendix

\section{Proof of Lemma \ref{lemm:vanishing-sums}}
\label{appendix:vanishing-sums}

Here we give a proof of Lemma \ref{lemm:vanishing-sums}. 
We will prove the following stronger statement: 
For any $s \in \{0,1,\dots, n-1 \}$ and any $\ell \in \frac{1}{2} {\mathbb Z}$, 
we have
\begin{equation} \label{eq:vanishing-sums-appendix}
\sum_{(\varepsilon_1, \dots, \varepsilon_n) \in \{ \pm 1 \}^n} 
\varepsilon_1 \cdots \varepsilon_n \cdot
\biggl( \sum_{j=1}^{n} \frac{\varepsilon_j}{p_j} \biggr)^s \cdot
\sum_{m=0}^{K-1} e^{\frac{\pi i}{2PK}\, ( 2Pm + a_{\ell, \varepsilon} )^2}
= 0.
\end{equation}
(Recall that $a_{\ell, \varepsilon} = P(2 \ell + n-2 
+ \sum_{j=1}^{n}({\varepsilon_j}/{p_j})) \in {\mathbb Z}$.)

For the purpose, first we show 
\begin{lem}\label{lemma:exponents-of-gauss-sum}
For any $\ell \in \frac{1}{2} {\mathbb Z}$
and any  
$\varepsilon = (\varepsilon_1, \dots, \varepsilon_n)$, 
$\tilde{\varepsilon} = (\tilde{\varepsilon}_1, \dots, \tilde{\varepsilon}_n) 
\in \{ \pm 1\}^n$, there exists a bijection 
\begin{equation} \label{eq:bijection-eplison-tildeepsilon}
\varphi_{\varepsilon, \tilde{\varepsilon}} : 
\{ (2Pm+a_{\ell, \varepsilon})^2 ~;~ m \in \{0,1,\dots,K-1 \} \rightarrow 
\{ (2Pm+a_{\ell, \tilde{\varepsilon}})^2 ~;~ m \in \{0,1,\dots,K-1 \}
\end{equation} 
such that 
\begin{equation}
x \equiv \varphi_{\varepsilon, \tilde{\varepsilon}}(x) ~ {\rm mod} ~ 4KP
\end{equation}
holds for all $x \in \{ (2Pm+a_{\ell, \varepsilon})^2 ~;~ m \in \{0,1,\dots,K-1 \}$.
\end{lem}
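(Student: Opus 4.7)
The plan is to construct the bijection $\varphi_{\varepsilon,\tilde{\varepsilon}}$ explicitly via the Chinese Remainder Theorem. First I reduce to the case where $\varepsilon$ and $\tilde{\varepsilon}$ differ in exactly one coordinate: an arbitrary $\tilde{\varepsilon}$ is reached from $\varepsilon$ by a sequence of single-coordinate flips, and the corresponding elementary bijections compose to give the general one. I also pass to residues modulo $2KP$, since $(2Pm+a)^{2} \bmod 4KP$ depends only on $(2Pm+a) \bmod 2KP$, and as $m$ ranges over $\{0,1,\dots,K-1\}$ the residue $(2Pm+a_{\ell,\varepsilon}) \bmod 2KP$ bijects onto
\begin{equation*}
S_{\varepsilon} := \{\, r \in \mathbb{Z}/2KP\mathbb{Z} \mid r \equiv a_{\ell,\varepsilon} \pmod{2P}\,\}.
\end{equation*}

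Assume now $\tilde{\varepsilon}_{i} = \varepsilon_{i}$ for $i \neq j$ and $\tilde{\varepsilon}_{j} = -\varepsilon_{j}$. Writing $2KP = M \cdot p_{j}^{v_{p_{j}}(2KP)}$ with $\gcd(M,p_{j})=1$, I define $\varphi : S_{\varepsilon} \to \mathbb{Z}/2KP\mathbb{Z}$ by the CRT conditions
\begin{equation*}
\varphi(r) \equiv r \pmod{M}, \qquad \varphi(r) \equiv -r \pmod{p_{j}^{v_{p_{j}}(2KP)}}.
\end{equation*}
Two properties must be verified: (i) $\varphi(r) \in S_{\tilde{\varepsilon}}$, i.e.\ $\varphi(r) \equiv a_{\ell,\tilde{\varepsilon}} \pmod{2P}$, and (ii) $\varphi(r)^{2} \equiv r^{2} \pmod{4KP}$ for every $r \in S_{\varepsilon}$. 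Granting these, the parametrization $m \mapsto 2Pm + a_{\ell,\varepsilon}$ lifts $\varphi$ to the desired bijection between the sets of integer squares in the statement.

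Claim (i) is checked separately modulo the coprime-to-$p_{j}$ part of $2P$ and modulo $p_{j}^{v_{p_{j}}(2P)}$. On the coprime part, $\varphi(r) \equiv r \equiv a_{\ell,\varepsilon}$, and one has $a_{\ell,\varepsilon} \equiv a_{\ell,\tilde{\varepsilon}}$ there because their difference $\pm 2P/p_{j}$ is clearly divisible by that factor. On the $p_{j}$-part, $\varphi(r) \equiv -r \equiv -a_{\ell,\varepsilon}$, so the required congruence reduces to $a_{\ell,\varepsilon} + a_{\ell,\tilde{\varepsilon}} \equiv 0 \pmod{p_{j}^{v_{p_{j}}(2P)}}$. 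Writing $a_{\ell,\varepsilon} + a_{\ell,\tilde{\varepsilon}} = 2A + 2\sum_{i\neq j}\varepsilon_{i}\alpha_{i}$ with $A = P(2\ell+n-2)$ and $\alpha_{i} = P/p_{i}$, the divisibility follows from $v_{p_{j}}(A) \geq v_{p_{j}}(P)$ and $v_{p_{j}}(\alpha_{i}) \geq v_{p_{j}}(P)$ for every $i \neq j$.

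Claim (ii) is verified prime by prime via CRT. For any odd prime $q$ dividing $4KP$ one has $v_{q}(4KP) = v_{q}(2KP)$, and the construction yields $\varphi(r) \equiv \pm r$ modulo the full $q$-part of $2KP$, which gives the square congruence at once. The delicate case is $q = 2$, where $v_{2}(4KP) = v_{2}(2KP) + 1$: the construction only provides $\varphi(r) \equiv \pm r \pmod{2^{v_{2}(2KP)}}$, and the missing factor of $2$ needed in the factorization $\varphi(r)^{2} - r^{2} = (\varphi(r)-r)(\varphi(r)+r)$ must come from the other factor. This is handled uniformly by noting that whichever of $\varphi(r) \pm r$ is not forced by the CRT conditions to vanish modulo $2^{v_{2}(2KP)}$ is nevertheless congruent to $\pm 2r$ modulo $2^{v_{2}(2KP)}$ and hence divisible by $2$; this works the same way whether $p_{j}$ is odd or $p_{j} = 2$. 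I expect this $2$-adic step to be the main point requiring care in the proof, but it is disposed of by the parity observation just described, completing the construction of $\varphi$.
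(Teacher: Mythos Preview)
Your proof is correct and takes a somewhat different, more streamlined route than the paper's. Both arguments reduce to a single-coordinate flip and ultimately rest on CRT, but the paper works directly with the index $m$: it writes the difference of squares as $x(m)-\tilde x(\tilde m)=4P\,X^{(1)}(m,\tilde m)\,X^{(2)}(m,\tilde m)$, first disposes of the easy cases $\gcd(p_1,K)=1$ or $\gcd(p_2\cdots p_n,K)=1$, and in the remaining case builds a factorization $K=K^{(1)}K^{(2)}$ via an iterated $\gcd$ construction before solving the linear congruence system $X^{(1)}\equiv 0\pmod{K^{(1)}}$, $X^{(2)}\equiv 0\pmod{K^{(2)}}$ for $\tilde m$. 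Your construction is more conceptual and uniform: passing to residues mod $2KP$ and defining $\varphi$ as ``identity on the prime-to-$p_j$ part, negation on the $p_j$-primary part'' eliminates the case analysis and makes bijectivity automatic; the $2$-adic step you flag is indeed the only place requiring care, and your parity observation handles it cleanly. What the paper's approach buys is that the final congruence $x(m)\equiv\tilde x(\tilde m)\pmod{4KP}$ is immediate from the product form, whereas you must check it prime by prime; what yours buys is a single construction with no branching.

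One genuine but easily repaired gap: writing $2KP=M\cdot p_j^{v_{p_j}(2KP)}$ with $\gcd(M,p_j)=1$ presumes $p_j$ is prime, while the paper's standing hypothesis is only that the $p_i$ are pairwise coprime integers $\ge 2$. If $p_j$ is composite this factorization need not exist (e.g.\ $p_j=6$, $K=2$). The fix is immediate: replace $p_j^{v_{p_j}(2KP)}$ by the $p_j$-primary part $N:=\prod_{q\mid p_j}q^{v_q(2KP)}$ (product over prime divisors $q$ of $p_j$) and set $M=2KP/N$; then $\gcd(M,N)=1$ genuinely holds, and your verifications of claims (i) and (ii) go through unchanged once phrased for each prime $q\mid p_j$ rather than for $p_j$ itself.
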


\begin{proof}[Proof of Lemma \ref{lemma:exponents-of-gauss-sum}]
For $m \in {\mathbb Z}$, define 
$x(m) := (2Pm+a_{\ell, \varepsilon})^2$ and 
$\tilde{x}(m) := (2Pm+a_{\ell, \tilde{\varepsilon}})^2$. 
To construct a bijection $\varphi_{\varepsilon, \tilde{\varepsilon}}$, 
we shall prove the following:
There is a bijective correspondence 
$\{0,1,\dots,K-1 \} \ni m \mapsto \tilde{m} \in \{0,1,\dots,K-1 \}$
such that 
\begin{equation} \label{eq:x-tilde-x}
x(m) \equiv \tilde{x}(\tilde{m}) ~ {\rm mod} ~ 4 KP
\end{equation}
holds. Then we may define the desired map 
\eqref{eq:bijection-eplison-tildeepsilon} by 
$\varphi_{\varepsilon, \tilde{\varepsilon}}(x(m)) = \tilde{x}(\tilde{m})$.

It is enough to prove the statement in the case 
\begin{equation}
\varepsilon = (\varepsilon_1, \varepsilon_2, \dots, \varepsilon_n),\quad
\tilde{\varepsilon} = (- \varepsilon_1, \varepsilon_2, \dots, \varepsilon_n),
\end{equation}
with $\varepsilon_1, \varepsilon_2, \dots, \varepsilon_n \in \{ \pm 1 \}$
being chosen arbitrary. In this case, we have
\begin{align}
& x(m) - \tilde{x}(\tilde{m}) \notag \\
& \quad = 
4P \biggl( p_1(m-\tilde{m})  + {\varepsilon_1} \biggr)
\biggl(p_2\cdots p_n \bigl( m+\tilde{m} + 2\ell+n-2 \bigr) 
+ p_2 \cdots p_n \sum_{j=2}^{n} \frac{\varepsilon_j}{p_j} \biggr)
\end{align}
for any $m, \tilde{m} \in \{0,1,\dots,K-1 \}$.

\bigskip
$\bullet$ \underline{The case $\gcd(p_1, K) = 1$ 
or $\gcd(p_2 \cdots p_n, K) = 1$.} \, 
In the case $\gcd(p_1, K) = 1$, for any given $m$, 
we can always find $\tilde{m}$ such that 
$ p_1(m - \tilde{m}) + \varepsilon_1 \in K \mathbb Z$ by the Euclidean algorithm. 
This gives a desired mapping satisfying \eqref{eq:x-tilde-x} 
since the correspondence $m \mapsto \tilde{m}$ is bijective modulo $K$.  
We can obtain a bijective map when $\gcd(p_2 \cdots p_n, K) = 1$ 
by a similar manner.

\bigskip

$\bullet$ \underline{The case $\gcd(p_1, K) > 1$ and 
$\gcd(p_2 \cdots p_n, K) > 1$.} \, 
For $m, \tilde{m} \in \{0,1,\dots, K-1 \}$, we set 
\begin{align}
X^{(1)}(m,\tilde{m}) & := p_1 (m-\tilde{m}) + \varepsilon, \\
X^{(2)}(m,\tilde{m}) & := p_2 \cdots p_n (m+\tilde{m}+2\ell+n-2) 
+ p_2 \cdots p_n \sum_{j=2}^{n} \frac{\varepsilon_j}{p_j}.
\end{align}
We must look at the structure of reminders 
(obtained by dividing by factors of $K$) of both 
$X^{(1)}(m,\tilde{m})$ and $X^{(2)}(m,\tilde{m})$.
For the purpose, let us introduce several notations. 

Starting from 
\begin{equation}
G^{(1)}_1 := \gcd(p_1, K) > 1, \quad 
K^{(1)}_1 := {K}/{G^{(1)}_1},
\end{equation} 
we define integers $G^{(1)}_i, K^{(1)}_i$ 
for $i \ge 2$ inductively by 
\begin{equation}
G^{(1)}_i := \gcd(p_1, K_{i-1}^{(1)}), \quad 
K^{(1)}_i := {K_{i-1}^{(1)}}/{G^{(1)}_i}.
\end{equation} 
Similarly, starting from 
\begin{equation}
G^{(2)}_1 := \gcd(p_2\cdots p_n, K) > 1, \quad 
K^{(2)}_1 := {K}/{G^{(2)}_1},
\end{equation} 
define integers $G^{(2)}_i, K^{(2)}_i$ 
for $i \ge 2$ inductively by 
\begin{equation}
G^{(2)}_i := \gcd(p_2\cdots p_n, K_{i-1}^{(2)}), \quad 
K^{(2)}_i := {K_{i-1}^{(2)}}/{G^{(2)}_i}.
\end{equation} 
For $s \in \{1, 2\}$, 
since $G^{(s)}_{i} | G^{(s)}_{i-1}$, 
there exists unique $d^{(s)} \ge 1$ such that 
\begin{equation}
G^{(s)}_{d^{(s)}} \ne 1 \quad \text{and} \quad
G^{(s)}_{d^{(s)}+1} = G^{(s)}_{d^{(s)}+2} = \cdots = 1.
\end{equation}
It also follows from the definition that $K$ is expressed as 
\begin{equation}
K 
 = G^{(1)}_1 \cdots G^{(1)}_{d^{(1)}} \cdot K^{(1)}_{d^{(1)}} 
 = G^{(2)}_1 \cdots G^{(2)}_{d^{(2)}} \cdot K^{(2)}_{d^{(2)}},
\end{equation}
which in particular implies that 
\begin{equation}
K = { K^{(1)}_{d^{(1)}} \cdot K^{(2)}_{d^{(2)}} }/{f},
\end{equation}
where 
\begin{equation}
f := {\rm gcd}(K^{(1)}_{d^{(1)}}, K^{(2)}_{d^{(2)}}) 
= \frac{K^{(1)}_{d^{(1)}}}{G^{(2)}_1 \cdots G^{(2)}_{d^{(2)}}}
= \frac{K^{(2)}_{d^{(2)}}}{G^{(1)}_1 \cdots G^{(1)}_{d^{(1)}}}. 
\end{equation}
Here we have used the fact that $G^{(1)}_i$ and $G^{(2)}_j$ 
do not have any non-trivial common divisors due to 
the pair-wise coprimeness of $p_1, \dots, p_n$.
Finally, we define
\begin{equation}
K^{(1)} := K^{(1)}_{d^{(1)}}, \quad
K^{(2)} := K^{(2)}_{d^{(2)}}/f.
\end{equation} 
In summary, we have obtained a factorization 
$K = K^{(1)} \cdot K^{(2)}$ satisfying 
\begin{equation} \label{eq:key-gcd-properties}
\gcd(p_1, K^{(1)}) = 1, \quad
\gcd(p_2\cdots p_n, K^{(2)}) = 1, \quad
\gcd(K^{(1)}, K^{(2)}) = 1.
\end{equation}
The properties in \eqref{eq:key-gcd-properties} 
are essential in the rest of the proof.

\begin{lem} \label{lemma:congruence-relation}
For any fixed $m \in \{0,1,\dots, K-1 \}$, 
the system of congruence equations
\begin{equation} \label{eq:system-of-congruence-relation}
\begin{cases}
X^{(1)}(m,\tilde{m}) \equiv 0
\,\,{\rm mod}\,\, K^{(1)} \\[+.5em]
X^{(2)}(m,\tilde{m}) \equiv 0
\,\,{\rm mod}\,\, K^{(2)}
\end{cases}
\end{equation}
has a unique solution $\tilde{m} \in \{0,1,\dots, K-1 \}$.
\end{lem}
\begin{proof}[Proof of Lemma \ref{lemma:congruence-relation}]
Although the claim follows from the Chinese remainder theorem, 
let us give a proof here for the convenience of readers.  

Firstly, note that the coprimeness of $p_1$ and $K^{(1)}$ 
implies that the congruence relation 
$X^{(1)}(m,\tilde{m}_1) - X^{(1)}(m,\tilde{m}_2) = 
p_1 (\tilde{m}_1 - \tilde{m}_2) \equiv 0$ mod $K^{(1)}$
is satisfied if and only if 
$\tilde{m}_1 \equiv \tilde{m}_2$ mod $K^{(1)}$.
Therefore, we have 
\begin{equation} \label{eq:mod-X1-K1}
\{ X^{(1)}(m,\tilde{m})\,\,{\rm mod}\,\,K^{(1)} ~;~ 
\tilde{m} \in \{0,1,\dots, K^{(1)} - 1 \} \} 
= \{ 0,1,\dots, K^{(1)} - 1 \}
\end{equation}
because the left hand side is a subset of the right hand-side 
consisting of $K^{(1)}$ distinct elements. 
Similarly, we have
\begin{equation} \label{eq:mod-X2-K2}
\{ X^{(2)}(m,\tilde{m})\,\,{\rm mod}\,\,K^{(2)} ~;~ 
\tilde{m} \in \{0,1,\dots, K^{(2)} - 1 \} \} 
= \{ 0,1,\dots, K^{(2)} - 1 \}.
\end{equation}
It follows from \eqref{eq:mod-X1-K1} 
(resp., and \eqref{eq:mod-X2-K2}) that 
there exists a unique 
$\tilde{m}^{(1)}_0 \in \{0,1,\dots, K^{(1)} - 1 \}$
(resp., $\tilde{m}^{(2)}_0 \in \{0,1,\dots, K^{(2)} - 1 \}$)
satisfying 
\begin{equation}
X^{(1)}(m,\tilde{m}^{(1)}_0) \equiv 0 \,\,{\rm mod}\,\,K^{(1)} \quad 
\bigl({\rm resp}.,~ X^{(2)}(m,\tilde{m}^{(2)}_0) 
\equiv 0\,\,{\rm mod}\,\,K^{(2)} \bigr).
\end{equation}
Then, using the coprimeness of $K^{(1)}$ and $K^{(2)}$, 
we can find an integer $\tilde{m}$, 
which is unique modulo $K^{(1)} \cdot K^{(2)} = K$, 
satisfying $\tilde{m} \equiv \tilde{m}^{(s)}_0$ mod $K^{(s)}$ 
for both $s \in \{1, 2\}$ (by the Euclidean algorithm). 
It is easy to verify that the $\tilde{m}$ obtained here 
is the solution of \eqref{eq:system-of-congruence-relation}. 
This completes the proof of Lemma \ref{lemma:congruence-relation}.
\end{proof}

By Lemma \ref{lemma:congruence-relation}, 
we have a well-defined map 
$\{0,1,\dots, K-1 \} \ni m \mapsto \tilde{m} \in \{0,1,\dots, K-1 \}$ 
by solving \eqref{eq:system-of-congruence-relation}. 
The map satisfies \eqref{eq:x-tilde-x} since 
\begin{equation}
X^{(1)}(m,\tilde{m}) \cdot X^{(2)}(m,\tilde{m}) 
\in K^{(1)} {\mathbb Z} \cap K^{(2)} {\mathbb Z} = K {\mathbb Z}.
\end{equation}
To complete the proof of Lemma \ref{lemma:exponents-of-gauss-sum}, 
we must show that the correspondence between $m$ and $\tilde{m}$ is bijective.
This will be done as follows.

Suppose $m_1, m_2 \in \{0,1,\dots, K-1\}$ are mapped by 
$\tilde{m}_1, \tilde{m}_2 \in \{0,1,\dots, K-1\}$ by the correspondence 
specified by Lemma \ref{lemma:congruence-relation}, respectively.
In other words, suppose that 
\begin{equation}
\begin{cases}
X^{(1)}(m_1, \tilde{m}_1) \equiv X^{(1)}(m_2, \tilde{m}_2) \equiv 0 
\,\,{\rm mod}\,\, K^{(1)}, \\
X^{(2)}(m_1, \tilde{m}_1) \equiv X^{(2)}(m_2, \tilde{m}_2) \equiv 0
\,\,{\rm mod}\,\,K^{(2)} 
\end{cases}
\end{equation}
hold simultaneously. In particular, they satisfy
\begin{equation}
\begin{cases}
p_1(m_1 - \tilde{m}_1) \equiv p_1(m_2 - \tilde{m}_2) 
\,\,{\rm mod}\,\, K^{(1)}\\
p_2\cdots p_n(m_1 - \tilde{m}_1) \equiv p_2\cdots p_n(m_2 - \tilde{m}_2) 
\,\,{\rm mod}\,\, K^{(2)}.
\end{cases}
\end{equation}
The coprimeness \eqref{eq:key-gcd-properties} implies that
$(m_1 - m_2) \equiv (\tilde{m}_1 - \tilde{m}_2)$ mod $K$. 
Hence we have $\tilde{m}_1 \equiv \tilde{m}_2$ mod $K$ if and only if 
${m}_1 \equiv {m}_2$ mod $K$. Thus we have verified that 
the above correspondence $m \mapsto \tilde{m}$ is bijective.
This completes the proof of Lemma \ref{lemma:exponents-of-gauss-sum}.
\end{proof}

Thanks to Lemma \ref{lemma:exponents-of-gauss-sum}, we have
\begin{multline}
\sum_{(\varepsilon_1, \dots, \varepsilon_n) \in \{ \pm 1 \}^n} 
\varepsilon_1 \cdots \varepsilon_n \cdot
\biggl( \sum_{j=1}^{n} \frac{\varepsilon_j}{p_j} \biggr)^s \cdot
\sum_{m=0}^{K-1} e^{\frac{\pi i}{2PK}\, ( 2Pm + a_{\ell, \varepsilon} )^2} 
\\
= 
\left( \sum_{m=0}^{K-1} e^{\frac{\pi i}{2PK}\, ( 2Pm + a_{\ell, \varepsilon^{(0)}} )^2} \right)
\cdot  
\sum_{(\varepsilon_1, \dots, \varepsilon_n) \in \{ \pm 1 \}^n} 
\varepsilon_1 \cdots \varepsilon_n \cdot
\biggl( \sum_{j=1}^{n} \frac{\varepsilon_j}{p_j} \biggr)^s, 
\end{multline}
where $\varepsilon^{(0)} \in \{\pm 1 \}^n$ is any fixed $n$-tuple of signatures.
Then, the desired equality \eqref{eq:vanishing-sums-appendix} 
is reduced to the following simpler statement: 

\begin{lem}
For each $s \in \{0,1,\dots, n-1 \}$, we have 
\begin{equation} \label{eq:vanishing-sums-appendix-2}
\sum_{(\varepsilon_1, \dots, \varepsilon_n) \in \{ \pm 1 \}^n} 
\varepsilon_1 \cdots \varepsilon_n \cdot
\biggl( \sum_{j=1}^{n} \frac{\varepsilon_j}{p_j} \biggr)^s 
= 0.
\end{equation}
\end{lem}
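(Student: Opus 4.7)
The plan is to expand the $s$-th power by the multinomial theorem and exploit parity. Write
\begin{equation*}
\biggl(\sum_{j=1}^{n} \frac{\varepsilon_j}{p_j}\biggr)^s = \sum_{\substack{k_1,\dots,k_n \geq 0 \\ k_1 + \cdots + k_n = s}} \binom{s}{k_1,\dots,k_n} \prod_{j=1}^{n} \frac{\varepsilon_j^{k_j}}{p_j^{k_j}},
\end{equation*}
so the left-hand side of \eqref{eq:vanishing-sums-appendix-2} becomes a linear combination of sums of the form
\begin{equation*}
\sum_{(\varepsilon_1,\dots,\varepsilon_n) \in \{\pm 1\}^n} \varepsilon_1 \cdots \varepsilon_n \cdot \prod_{j=1}^n \varepsilon_j^{k_j} = \prod_{j=1}^n \Biggl(\sum_{\varepsilon_j \in \{\pm 1\}} \varepsilon_j^{k_j+1}\Biggr),
\end{equation*}
after swapping the order of summation and using independence of the $\varepsilon_j$.

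Next I would observe that each factor $\sum_{\varepsilon_j \in \{\pm 1\}} \varepsilon_j^{k_j + 1}$ equals $2$ if $k_j + 1$ is even (i.e., $k_j$ is odd) and vanishes otherwise. Consequently, the product is nonzero only for multi-indices $(k_1,\dots,k_n)$ in which every $k_j$ is odd; in particular every $k_j \geq 1$, forcing
\begin{equation*}
s = k_1 + k_2 + \cdots + k_n \geq n.
\end{equation*}

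Finally, since we are in the range $s \in \{0,1,\dots,n-1\}$, no such multi-index exists, so every term in the expansion vanishes and \eqref{eq:vanishing-sums-appendix-2} follows. There is no real obstacle here: the only mild subtlety is confirming the parity criterion and the resulting lower bound $s \geq n$, after which the conclusion is immediate. This also explains why the range of $s$ in the lemma is sharp: at $s = n$ the contribution $k_1 = \cdots = k_n = 1$ is the unique surviving term and yields a nonzero value proportional to $\prod_j (1/p_j)$, so the cancellation mechanism breaks down exactly at $s = n$.
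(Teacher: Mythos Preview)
Your proof is correct and in fact more direct than the paper's. The paper proceeds by induction: it peels off one factor of $\sum_j \varepsilon_j/p_j$ from the $s$-th power, absorbs the resulting $\varepsilon_j$ into the sign product via $\varepsilon_j^2=1$, then sums over $\varepsilon_j=\pm 1$ and expands with the binomial theorem to reduce to the same identity with $n-1$ signs and exponent $t\le s-1\le n-2$, invoking the inductive hypothesis. Your multinomial-plus-parity argument bypasses the induction entirely and immediately isolates the obstruction (all $k_j$ odd forces $s\ge n$); it also makes the sharpness at $s=n$ explicit, since $k_1=\cdots=k_n=1$ is the unique surviving multi-index there. Both arguments are short; yours is marginally cleaner and yields the threshold information for free.
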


\begin{proof}
The claim for $s=0$ is trivial. 
Since 
\begin{align}
& \sum_{(\varepsilon_1, \dots, \varepsilon_n)} 
\varepsilon_1 \cdots \varepsilon_n \cdot
\biggl( \sum_{j=1}^{n} \frac{\varepsilon_j}{p_j} \biggr)^s 
= \sum_{j=1}^{n} \frac{1}{p_j} 
\sum_{(\varepsilon_1, \dots, \varepsilon_n)} 
\varepsilon_1 \cdots \hat{\varepsilon}_j \cdots \varepsilon_n \cdot
\biggl( \sum_{k=1}^{n} \frac{\varepsilon_k}{p_k} \biggr)^{s-1} 
\notag \\
& = 
\sum_{j=1}^{n} \frac{1}{p_j} 
\sum_{(\varepsilon_1, \dots, \hat{\varepsilon}_j \dots, \varepsilon_n)} 
\varepsilon_1 \cdots \hat{\varepsilon}_j \cdots \varepsilon_n \cdot
\left\{ 
\biggl( \sum_{\substack{k=1 \\ k \ne j}}^{n} \frac{\varepsilon_j}{p_k} 
+ \frac{1}{p_j} \biggr)^{s-1} +
\biggl( \sum_{\substack{k=1 \\ k \ne j}}^{n} \frac{\varepsilon_j}{p_k} 
- \frac{1}{p_j} \biggr)^{s-1} 
\right\}
\notag \\
& = 
\sum_{j=1}^{n} \frac{1}{p_j} 
\sum_{t=0}^{s-1} \dbinom{s-1}{t} \cdot 
\frac{1+(-1)^{s-1-t}}{p_j^{s-1-t}} \cdot
\sum_{(\varepsilon_1, \dots, \hat{\varepsilon}_j \dots, \varepsilon_n)} 
\varepsilon_1 \cdots \hat{\varepsilon}_j \cdots \varepsilon_n \cdot
\biggl( \sum_{\substack{k=1 \\ k \ne j}}^{n} \frac{\varepsilon_j}{p_k} \biggr)^t,
\end{align}
the equality \eqref{eq:vanishing-sums-appendix-2} is proved by the induction.
Thus we have proved \eqref{eq:vanishing-sums-appendix}.
\end{proof}


\begin{thebibliography}{99}
\bibitem[A]{Andersen-talk}
J. E. Andersen, 
Resurgence analysis of the WRT-TQFT, 
a talk given in the workshop ``A Gauge Summer with BV: Online", June 2020. \\
\url{https://sites.google.com/view/gaugesummerwithbv/home?authuser=0}.


\bibitem[AM]{AP}
J. E. Andersen and W. E. Misteg$\mathring{\rm a}$rd,  
Resurgence Analysis of Quantum Invariants: 
Seifert Manifolds and Surgeries on The Figure Eight Knot,
ArXiv:1811.05376.


\bibitem[A]{A}
D. Auckly,
Topological methods to compute Chern-Simons invariants,
Math. Proc. Cambridge Philos. Soc. {\bf 115} (1994), no. 2, 229--251. 

\bibitem[B]{Beasley09}
C. Beasley, 
Localization for Wilson loops in Chern-Simons theory, 
Adv. Theor. Math. Phys. {\bf 17} (2013), 1--240. 
ArXiv:0911.2687 [hep-th].

\bibitem[BH]{BH}
N. Bleistein and R. A. Handelsman, 
Asymptotic Expansions of Integrals,
Holt, Rinehart and Winston, New York, 1975. 

\bibitem[C]{Chun17}
S. Chun, 
A resurgence analysis of the $SU(2)$ Chern-Simons partition functions 
on a Brieskorn homology sphere $\Sigma(2,5,7)$,
ArXiv:1701.03528 [hep-th].

\bibitem[Ch1]{Chung18}
H.-J. Chung,
BPS invariants for Seifert manifolds, 
{JHEP}, {\bf 2020} (2020).
ArXiv:1811.08863 [hep-th].

\bibitem[Ch2]{Chung20}
H.-J. Chung,
Resurgent Analysis for Some 3-manifold Invariants. 
ArXiv:2008.02786 [hep-th].

\bibitem[Co]{Costin}
O. Costin, 
Asymptotics and Borel Summability, 
Chapman \verb+&+ Hall/CRC Monographs and Surveys 
in Pure and Applied Mathematics, 
Vol. {\bf 141}, CRC Press, Boca Raton, FL, 2009.

\bibitem[CG]{CG11}
O. Costin and S. Garoufalidis, 
Resurgence of the Kontsevich-Zagier power series, 
Ann. Inst. Fourier Grenoble,  {\bf 61} (2011), 1225--1258.
ArXiv:math/0609619 [math.GT].

\bibitem[CCGLS]{CCGLS94}
D. Cooper, M. Culler, H. Gillet, D. D. Long and P. B. Shalen, 
Plane Curves Associated to Character Varieties of 3-Manifolds, 
Invent. Math. {\bf 118} (1994), no 1, 47--84. 


\bibitem[DP]{DP99}
E. Delabaere and F. Pham, 
Resurgent methods in semi-classical asymptotics,
Annales de l'I.H.P. Physique th\'eorique, 
{\bf 71} (1999), 1--94.

\bibitem[Ga]{Garoufalidis}
S. Garoufalidis, 
On the characteristic and deformation varieties of a knot, 
Geom. Topol. Monogr. {\bf 7} (2004), 291--309. 
ArXiv:math/0306230 [math.GT].

\bibitem[Gu]{Gukov04}
S. Gukov, 
Three-Dimensional Quantum Gravity, Chern-Simons Theory, and the A-Polynomial, 
Comm. Math. Phys., {\bf 255} (2005), 577--627.
ArXiv:hep-th/0306165. 

\bibitem[GM]{GM19}
S. Gukov and C. Manolescu,
A two-variable series for knot complements. 
ArXiv:1904.06057 [math.GT].

\bibitem[GMP]{GMP16}
S. Gukov, M. Marin\~o and P. Putrov, 
Resurgence in complex Chern-Simons theory. 
ArXiv:1605.07615 [hep-th].

\bibitem[GPV]{GPV16}
S. Gukov, P. Putrov and C. Vafa, 
Fivebranes and 3-manifold homology, 
JHEP, {\bf 2017} (2017). 
ArXiv:1602.05302 [hep-th]. 
 
\bibitem[GPPV]{GPPV17}
S. Gukov, D. Pei, P. Putrov and C. Vafa, 
BPS spectra and 3-manifold invariants, 
J. Knot Theory Ramif., 
{\bf 29} (2020), No. 02, 2040003. 
ArXiv:1701.06567 [hep-th].


\bibitem[HT1]{Hansen-Takata-1}
S.K. Hansen and T. Takata, 
Reshetikhin-Turaev invariants of Seifert 3-manifolds 
for classical simple Lie algebras,
J. Knot Theory Ramif., 
{\bf 13} (2004), no. 5, 617--668.
ArXiv:math/0209403 [math.GT].


\bibitem[HT2]{Hansen-Takata-2}
S.K. Hansen and T. Takata, 
Quantum invariants of Seifert 3-manifolds and their asymptotic expansions. 
Invariants of knots and 3-manifolds (Kyoto, 2001), 69--87, 
Geom. Topol. Monogr., {\bf 4}, Geom. Topol. Publ., Coventry, 2002. 
ArXiv:math/0210011 [math.GT]

\bibitem[H1]{Hikami-difference}
K. Hikami, 
Difference equation of the colored Jones polynomial for torus knot, 
Int. J. Math. {\bf 15}, 959--965 (2004).
ArXiv:math/0403224.


\bibitem[H2]{Hikami04}
K. Hikami, 
On the quantum invariant for the Brieskorn homology spheres, 
Int. J. Math. {\bf 16} (2005), 661--685. 
ArXiv:math-ph/0405028.


\bibitem[H3]{Hikami04-2}
K. Hikami, 
Quantum invariant, modular form, and lattice points,
{IMRN} {\bf 2005} (2005), Issue 3, 121--154. 
ArXiv:math-ph/0409016.

\bibitem[H4]{Hikami05}
K. Hikami, 
On the Quantum Invariant for the Spherical Seifert Manifold,
{\it Commun. Math. Phys.}, {\bf 268} (2006), 285--319.
arXiv:math-ph/0504082.

\bibitem[H5]{Hikami06}
K. Hikami, 
Quantum invariants, modular forms, and lattice points II, 
J. Math. Phys. {\bf 47} (2006), 102301-32pages.
ArXiv:math/0604091 [math.QA]. 


\bibitem[H6]{Himami11}
K. Hikami, 
Decomposition of Witten-Reshetikhin-Turaev invariant: Linking pairing and modular forms, 
in Chern-Simons Gauge Theory: 20 Years After, 
AMS/IP Stud. Adv. Math. Volume {\bf 50} (2011), 
Amer. Math. Soc., Providence, RI. 

\bibitem[HK]{Hikami-Kirillov}
K. Hikami and A. N. Kirillov, 
Torus knot and minimal model, 
Phys. Lett. B, {\bf 575} (2003), 343--348. 
ArXiv:hep-th/0308152.

\bibitem[HM1]{Hikami-Murakami08}
K. Hikami and H. Murakami, 
Colored Jones polynomials with polynomial growth, 
Commun. Contemp. Math., {\bf 10} (2008), 815--834. 
ArXiv:0711.2836 [math.GT].

\bibitem[HM2]{Hikami-Murakami10}
K. Hikami and H. Murakami, 
Representations and the colored Jones polynomial of a torus knot,
in Chern-Simons Gauge Theory: 20 Years After, 
AMS/IP Stud. Adv. Math. Volume {\bf 50} (2011), 
Amer. Math. Soc., Providence, RI. 
ArXiv:1001.2680 [math.GT].



\bibitem[K]{Kashaev}
R. M. Kashaev, 
The Hyperbolic Volume of Knots from the Quantum Dilogarithm, 
Letters in Mathematical Physics, {\bf 39} (1997), 269--275.
ArXiv:q-alg/9601025.

\bibitem[KT]{KT99}
R. M. Kashaev and O. Tirkkonen, 
A proof of the volume conjecture on torus knots,
Zap. Nauchn. Sem. S.-Peterburg. Otdel. Mat. Inst. Steklov. (POMI) {\bf 269} (2000), 
Vopr. Kvant. Teor. Polya i Stat. Fiz. {\bf 16}, 262--268, 370; 
translation in J. Math. Sci. (N.Y.) {\bf 115} (2003) 2033--2036.
ArXiv:math/9912210 [math.GT].


\bibitem[KM]{KM91}
R. Kirby and P. Melvin, 
The $3$-manifold invariants of Witten and Reshetikhin-Turaev 
for ${\rm sl}(2,{\bf C})$, 
Invent. Math. {\bf 105} (1991), 473--545.

\bibitem[LR]{LR}
R. Lawrence and L. Rozansky, 
Witten-Reshetikhin-Turaev invariants of Seifert manifolds,
Commun. Math. Phys. {\bf 205} (1999), 287--314.


\bibitem[LZ]{LZ}
R. Lawrence and D. Zagier, 
Modular forms and quantum invariants of 3-manifolds, 
Asian J. of Math. {\bf 3} (1999), 93--108.

\bibitem[Ma]{Marino05}
M. Marin\~o, 
Chern-Simons Theory, Matrix Integrals, and Perturbative 
Three-Manifold Invariants,
Commun. Math. Phys. {\bf 253} (2005), 25--49. 
ArXiv:hep-th/0207096.

\bibitem[Mi1]{Mistegard}
W. E. Misteg$\mathring{\rm a}$rd, 
Quantum Invariants and Chern-Simons Theory, 
PhD Dissertations, Aarhus University,
August 2019. 

\bibitem[Mi2]{Mistegard-talk}
W. E. Misteg$\mathring{\rm a}$rd, 
Quantum Modularity and Resurgence, a talk given in IST Austria, May 2020. \\
\url{https://www.researchgate.net/publication/341574789_Quantum_Modularity_and_Resurgence}


\bibitem[Mo]{Morton95}
H. R. Morton, 
The coloured Jones function and Alexander polynomial for torus knots, 
Math. Proc. Cambridge Philos. Soc., 
{\bf 117} (1995), no. 1, 129--135.


\bibitem[Mu1]{Murakami04}
H. Murakami, 
Asymptotic behaviors of the colored Jones polynomials of a torus knot,
Internat. J. Math. {\bf 15} (2004), 547--555.

\bibitem[Mu2]{Murakami06}
H. Murakami, 
A version of the volume conjecture, 
Adv. Math., {\bf 211} (2007), 678--683.
ArXiv:math/0603217 [math.GT].

\bibitem[MM]{Murakami-Murakami01}
H. Murakami and J. Murakami, 
The colored Jones polynomials and the simplicial volume of a knot, 
Acta Mathematica, {\bf 186} (2001), 85--104. 
ArXiv:math/9905075.

\bibitem[MMOTY]{MMOTY}
H. Murakami, J. Murakami, M. Okamoto, T. Takata and Y. Yokota, 
Kashaev's conjecture and the Chern-Simons invariants of knots and links, 
Experiment. Math. {\bf 11} (2002) 427--435.
ArXiv:math/0203119 [math.GT].

\bibitem[MY]{MY}
H. Murakami and Y. Yokota, 
Volume Conjecture for Knots, 
SpringerBriefs in Mathematical Physics, 
Vol. {\bf 30}, Springer Singapore, 2018.


\bibitem[RT]{RT91}
N. Reshetikhin and V. Turaev,
Invariants of 3-manifolds via link polynomials and quantum groups, 
Invent. Math. {\bf 103} (1991), 547--597.

\bibitem[RJ]{RJ93}
M. Rosso and V. Jones, 
On the invariants of torus knots derived from quantum groups, 
J. Knot Theory Ramification, {\bf 2} (1993), no. 1, 129--135.

\bibitem[S]{Sauzin}
D. Sauzin, 
Introduction to 1-summability and resurgence, 
in Divergent Series, Summability and Resurgence I: 
Monodromy and Resurgence, 
Lecture notes in mathematics {\bf 2153} (2016). 
ArXiv:1405.0356.

\bibitem[W]{Witten89}
E. Witten, 
Quantum field theory and the Jones polynomial, 
Commun. Math. Phys. {\bf 121} (1989), 351.

\bibitem[Z]{Zagier81}
D. Zagier, 
Zetafunktionen und quadratische K\"orper: eine Einf\"uhrung in die 
h\"ohere Zahlentheorie, Hochschultext, Springer-Verlag, 
Berlin-Heidelberg-New York (1981). \\
(Japanese translation: Suuron Nyuumon--zeta-kansuu to nijitai, 
Iwanami Shoten, Tokyo (1990)).

\end{thebibliography}
\end{document}